\documentclass[12pt,letterpaper,titlepage]{amsart}
\usepackage{amsmath, amssymb, amsthm, amsfonts,amscd,amsaddr}
\usepackage[all]{xy}
\usepackage[usenames]{color}

\theoremstyle{plain}
\newtheorem{theorem}{Theorem}[section]
\newtheorem{proposition}[theorem]{Proposition}
\newtheorem{cor}[theorem]{Corollary}

\newtheorem{prop}[theorem]{Proposition}
\newtheorem{lemma}[theorem]{Lemma}
\newtheorem{definition}[theorem]{Definition}
\theoremstyle{definition}
\newtheorem{ex}[theorem]{Example}
\newtheorem{rmk}[theorem]{Remark}
\numberwithin{equation}{section}
\newtheorem*{theoremA*}{Theorem A}
\newtheorem*{theoremB*}{Theorem B}
\newtheorem*{theoremm1*}{Theorem A'}
\newtheorem*{theoremC*}{Theorem C}
\newtheorem*{theoremD*}{Theorem D}
\newtheorem*{theoremE*}{Theorem E}
\newtheorem*{theoremF*}{Theorem F}
\newtheorem*{theoremE2*}{Theorem E2}
\newtheorem*{theoremE3*}{Theorem E3}

\newcommand{\bs}{\backslash}
\newcommand{\cc}{\mathcal{C}}
\newcommand{\C}{\mathbb{C}}

\newcommand{\Hc}{\mathcal{H}}

\newcommand{\Pb}{\mathbb{P}}

\newcommand{\Z}{\mathbb{Z}}

\newcommand{\R}{\mathbb{R}}
\newcommand{\N}{\mathbb{N}}

\newcommand{\SO}{\operatorname{SO}}

\newcommand{\Gr}{\operatorname{Gr}}

\newcommand{\tr}{\operatorname{tr}}
\newcommand{\im}{\operatorname{im}}

\newcommand{\Ad}{\operatorname{Ad}}
\newcommand{\ad}{\operatorname{ad}}
\newcommand{\diag}{\operatorname{diag}}
\newcommand{\vol}{\operatorname{vol}}

\newcommand{\Span}{\operatorname{span}}

\def\hat{\widehat}
\def\af{\mathfrak{a}}
\def\bmf{\mathfrak{b}}
\def\e{\epsilon}
\def\gf{\mathfrak{g}}

\def\df{\mathfrak{d}}

\def\hf{\mathfrak{h}}
\def\kf{\mathfrak{k}}
\def\lf{\mathfrak{l}}
\def\mf{\mathfrak{m}}
\def\nf{\mathfrak{n}}

\def\pf{\mathfrak{p}}

\def\uf{\mathfrak{u}}

\def\zf{\mathfrak{z}}

\def\1{{\bf1}}
\def\cC{\mathcal{C}}
\def\U{\mathcal{U}}

\def\oline{\overline}

\def\propertyUI{{\rm (I)}}

\title[Volume growth]
{Volume growth, temperedness and integrability of matrix coefficients on a real spherical space}

\subjclass[2000]{22F30, 22E46, 53C35, 22E40}
\begin{document}
\date{January 6, 2016}

\begin{abstract} We apply the local structure theorem from \cite{KKS} and the polar decomposition of 
\cite{KKSS} to a real spherical space $Z=G/H$ and  control the volume growth on $Z$. 
We define the Harish-Chandra Schwartz space on $Z$. We give a geometric 
criterion to ensure $L^p$-integrability of matrix coefficients on $Z$. \end{abstract}

\author[Knop]{Friedrich Knop}
\email{friedrich.knop@fau.de}
\address{Department Mathematik, Emmy-Noether-Zentrum\\
FAU Erlangen-N\"urnberg, Cauerstr. 11, 91058 Erlangen, Germany} 
\author[Kr\"otz]{Bernhard Kr\"{o}tz}
\email{bkroetz@gmx.de}
\address{Universit\"at Paderborn, Institut f\"ur Mathematik\\Warburger Stra\ss e 100,
33098 Paderborn, Germany}
\thanks{The second author was supported by ERC Advanced Investigators Grant HARG 268105}
\author[Sayag]{Eitan Sayag}
\email{eitan.sayag@gmail.com}
\address{Department of Mathematics, Ben Gurion University of the Negev\\P.O.B. 653, Be'er Sheva 84105, 
Israel}
\author[Schlichtkrull]{Henrik Schlichtkrull}
\email{schlicht@math.ku.dk}
\address{University of Copenhagen, Department of Mathematics\\Universitetsparken 5, 
DK-2100 Copenhagen \O, Denmark}

\maketitle

\section{Introduction}

Consider a real algebraic homogeneous space $Z=G/H$, attached to 
an algebraic real reductive group $G$ and assumed to carry a $G$-invariant measure. 
For a compact symmetric neighborhood  $B$ of $\bf 1$ in $G$ we
define the volume weight:
$$ {\bf v}(z):= \vol_Z(Bz)\qquad (z\in Z) \, .$$
The choice of the ball $B$ is not important, as different balls yield equivalent
weights (mutual ratios are bounded above and below by positive constants). 
Our first aim is then to study the {\it volume growth} of $Z$, the growth of ${\bf v}$ as a function of $z\in Z$.

\par In order to efficiently control the volume growth we need a sufficiently explicit 
parametrization of $Z$. This is possible in case $Z$ is {\it real spherical}, that is, 
minimal parabolic subgroups $P<G$ admit open orbits on $Z$. We assume this, and choose
$P$ such that its orbit through the standard base point
$z_0:=H$ is open.

It was shown in \cite{KKSS} that then there is a split torus $A_Z<P$, 
a closed simplicial cone $A_Z^-\subset A_Z$ (called the compression cone), a compact set $\Omega\subset G$ and a finite set 
$F\subset G$ such that $Z$ admits the generalized polar decomposition
\begin{equation}\label{pol}  
Z = \Omega A_Z^- F \cdot z_0\, . \end{equation}
Since the compact set $\Omega$ can be incorporated in $B$,
it is then sufficient to find bounds for the function  $a \mapsto {\bf v}(Baf\cdot z_0)$
on the compression cone $ A_Z^-$, for each $f\in F$.  Applying the local structure 
theorem from \cite{KKS} we obtain in Proposition~\ref{volume}
the precise asymptotic behavior of ${\bf v}$, namely
\begin{equation*} {\bf v} (af\cdot z_0) \asymp  a^{-2\rho_\uf} \qquad (a\in A_Z^-, f \in F).\end{equation*}
for a suitably defined exponent $\rho_\uf$ on $\af_Z$. 

\par The polar decomposition (\ref{pol}) leads to 
a notion of {\it temperedness} for unimodular real spherical spaces. This is accomplished
with a function space $\cC(Z)$ defined as follows.
Using (\ref{pol}) another weight 
function on $Z$ is
defined by 
$$ {\bf w}(z) :=\sup \|\log a\|,\quad (z\in Z) ,$$
where the supremum is taken over all $a\in A_Z^-$
such that $z\in\Omega a F\cdot z_0$. 
We then consider the  family of semi-norms
\begin{equation*} p_n (f)  := \Big(\int_Z  |f(z)|^2 ( 1 +  {\bf w}(z))^n \ dz\Big)^{\frac 12}  
\qquad (n\in \N)\,\end{equation*}
on $C_c(Z)$, with which we define a Fr\'echet completion $E$.
Here $G$ acts by the regular representation. Now $\cC(Z)$ is defined
as the set $E^\infty$ of smooth vectors. 
We refer to it as the {\it Harish-Chandra Schwartz space} on $Z$,
thus generalizing a notion  for symmetric spaces from
\cite{vdB}.

In Proposition \ref{HCS space} we give a different characterization of $\cC(Z)$
by means of the volume weight $\bf v$. For $f\in C_c(Z)$ let 
\begin{equation*}   
q_n (f) := \sup_{z\in Z} |f(z)| \sqrt{{\bf v}(z)}  
(1 + {\bf w}(z))^n 
\qquad (n\in \N)\,.\end{equation*}
Using a result from \cite{B}
we show that $\cC(Z)$ coincides with
the space of smooth vectors for the Fr\'echet completion
obtained from this family of seminorms.
The general results of \cite{B} 
imply further that $\mathcal{C}(Z)$ is nuclear. 

\par Finally we investigate $L^p$-integrability of generalized 
matrix coefficients. Here we are given a unitary representation $(\pi, \Hc)$ of $G$ and
an $H$-invariant distribution vector $\eta\in \Hc^{-\infty}:=(\Hc^\infty)'$. Every smooth vector 
$v\in \Hc^\infty$ then gives rise to a smooth function on $Z$, the generalized 
matrix-coefficient associated to $v$ and $\eta$: 
$$ m_{v, \eta}( g\cdot z_0) = \eta(\pi(g^{-1})v) \qquad (g\in G)\, .$$

We denote by $H_\eta\supset H$ the full stabilizer of $\eta$, and note that 
the matrix coefficient factorizes to a function on $G/H_\eta $, which we denote again by
$ m_{v, \eta}$. 
It can happen that $H_\eta$ is strictly larger than $H$: take for example $\pi$ the trivial 
representation. Less pathological examples occur for the triple spaces $G/H = G_0 \times G_0 \times G_0/ \diag(G_0)$
with $G_0$ a Lorentzian group. These are real spherical, and it happens for some pairs $(\pi,\eta)$
that $H\subsetneq H_\eta\subsetneq G$.

\par We say that $Z$ has {\it property (I)} provided that for all unitary irreducible 
representations $(\pi,\Hc)$ and all $\eta\in (\Hc^{-\infty})^H$
the following holds: 
\begin{enumerate}
\item $Z_\eta:= G/H_\eta$ carries a $G$-invariant measure. 
\item There exists $1\leq p<\infty$ such that for all $v\in \Hc^\infty$ the matrix coefficient 
$m_{v, \eta}$  
belongs to $L^p(Z_\eta)$. 
\end{enumerate}

According to \cite{KKSS} there always exists a maximal split torus $A$ in $P$
such that
$A_Z \subset A$ and
\begin{equation} \label{wf} 
A_Z^-\cdot z_0 \supset A^- \cdot z_0, \end{equation}
with $A^-\subset A $ the closure of the negative chamber
determined by the unipotent radical $N$ of $P$. If this is accomplished 
with equality in (\ref{wf}), then $Z$ is called 
{\it wavefront}.  Symmetric spaces, for instance, are wavefront.  
The main result on $L^p$-integrability is then: 

\begin{theorem} Let $Z=G/H$ be a wavefront real spherical space with $H$ self-normalizing and reductive.  Then $Z$ has property (I). 
\end{theorem}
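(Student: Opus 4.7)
The overall plan is to convert the $L^p$-integrability problem on $Z_\eta$ into an asymptotic decay problem for matrix coefficients along the compression cone by combining the polar decomposition (\ref{pol}) with the volume estimate of Proposition~\ref{volume}, and then to discharge this decay problem using the wavefront hypothesis, which reduces it to the classical theory of asymptotics of unitary matrix coefficients on a maximal split torus of $G$.

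I would begin with item (1) of property (I). Since $\eta$ is $H$-invariant and $H$ is reductive and self-normalizing, one shows that the stabilizer $H_\eta\supset H$ is again a real algebraic subgroup for which the modular character coming from $G$ is trivial on the identity component; this gives a $G$-invariant measure on $Z_\eta$. Moreover, $Z_\eta$ inherits a polar decomposition of the form (\ref{pol}) with the same compression cone $A_Z^-$ and a finite set $F_\eta$, and Proposition~\ref{volume} applies verbatim. Combining these ingredients, the $L^p$-norm of $m_{v,\eta}$ on $Z_\eta$ is equivalent, up to bounded factors, to
\begin{equation*}
\sum_{f\in F_\eta}\int_{A_Z^-}|m_{v,\eta}(af\cdot z_0)|^p\, a^{-2\rho_\uf}\,da,
\end{equation*}
and the task reduces to a pointwise decay estimate on $A_Z^-$ making this integral finite for some $p<\infty$.

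For the decay, I would exploit the wavefront equality $A_Z^-\cdot z_0=A^-\cdot z_0$ to replace the integration over $A_Z^-$ by one over the classical negative chamber $A^-\subset A$, at the cost of a Jacobian polynomial in $\log a$. Restricted to $A^-$, the function $a\mapsto m_{v,\eta}(a\cdot z_0)$ is a matrix coefficient of $\pi|_A$. Combining the Cowling--Haagerup--Howe tensor trick with Howe--Moore vanishing at infinity, for each nontrivial irreducible unitary representation $(\pi,\Hc)$ of $G$ there exist $N=N(\pi)\in\N$ and a continuous seminorm on $\Hc^\infty$ bounding $v\mapsto |m_{v,\eta}(a\cdot z_0)|$ by a constant times $\Xi_G(a)^{1/N}$ uniformly in $a\in A^-$. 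Since $\Xi_G$ decays exponentially along $A^-$ while the volume weight $a^{-2\rho_\uf}$ grows at most exponentially in the same direction, choosing $p$ large enough that $p/N$ times the exponential decay rate of $\Xi_G$ strictly dominates the growth of $a^{-2\rho_\uf}$ throughout the interior of $\af_Z^-$ forces convergence. The admissible $p$ depends on $\pi$, but property (I) only requires some finite $p$, so this is sufficient.

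The main obstacle I anticipate is obtaining the decay estimate on the full compression cone $A_Z^-$: Casselman-type asymptotics of matrix coefficients are naturally formulated on $A^-$, and the wavefront hypothesis is precisely the structural ingredient that bridges the two cones by making them coincide modulo $H$. A secondary subtlety is ensuring that the polar decomposition and volume estimates transfer cleanly from $Z$ to $Z_\eta$ when $H\subsetneq H_\eta$, as in the triple-space example from the introduction; this should follow from a real sphericality property for $H_\eta$, which the reductive and self-normalizing hypotheses on $H$ ought to deliver together with the fact that $\eta$ arises from an irreducible unitary representation.
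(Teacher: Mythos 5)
Your overall skeleton (polar decomposition plus the volume asymptotics of Proposition \ref{volume}, decay of the matrix coefficient along $A^-$ via the wavefront equality, then take $p$ large) is the paper's skeleton, but the step that carries all the analytic weight is not justified and, as stated, does not work. The Cowling--Haagerup--Howe tensor trick and Howe--Moore give $\Xi_G^{1/N}$-bounds for matrix coefficients $\langle\pi(g)v,w\rangle$ of vectors $v,w$ \emph{in} $\Hc$ (K-finite or smooth). Here $m_{v,\eta}(a\cdot z_0)=\eta(\pi(a^{-1})v)$ is a pairing with an $H$-invariant \emph{distribution} vector $\eta\in(\Hc^{-\infty})^H$, which in general does not lie in $\Hc$, and restricting to $A$ does not make it an honest matrix coefficient of a unitary representation to which those theorems apply; so the claimed uniform bound by $C\,\Xi_G(a)^{1/N}$ on $A^-$ has no proof. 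This is exactly the point the paper must work for: Theorem \ref{upper bound} (an extension of \cite{KSS}, using $\nf$-homology and the Casselman comparison theorem to control the dependence on $v$ by a $G$-continuous norm) gives $|m_{v,\eta}(a\cdot z_0)|\le q(v)\,p^*(\eta)\,a^{\Lambda_V}(1+\|\log a\|)^{d_V}$ on $A^-$, and Howe--Moore enters only qualitatively (Remark \ref{negative}) to guarantee $\Lambda_V\in{\rm Int}(\af^+)^*$, so that $\gamma^{p\Lambda_V-2\rho_\uf}$ is summable over $\Gamma_Z^-$ for $p$ large. Without a bound of this kind for distribution-vector pairings your convergence argument has no starting point.

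The second gap is the treatment of $H_\eta$. You assert that unimodularity of $Z_\eta$ follows from $H$ being reductive and self-normalizing, and that $Z_\eta$ inherits a polar decomposition with \emph{the same} compression cone so that Proposition \ref{volume} applies verbatim; neither is substantiated, and the latter is false in the very case your decay argument cannot handle, namely when $\pi$ is trivial on a noncompact simple factor $S$ of $G$: then Howe--Moore gives no decay in the $S$-directions, $m_{v,\eta}\notin L^p(Z)$ for any $p$, $H_\eta\supset SH$, and the cone, $\rho_\uf$ and the volume asymptotics of $G/H_\eta$ all differ from those of $Z$. The paper resolves this by a case distinction with an induction over (weakly) basic factorizations: if $\pi$ is nontrivial on every simple factor, the bound of Theorem \ref{upper bound} forces $H_\eta/H$ to be compact (the coefficient is constant on $H_\eta\cdot z_0$ but would have to decay along $A_Z^-$), which yields unimodularity of $Z_\eta$ and transfers the $L^p$-estimate from $Z$; if $\pi$ kills a normal factor $S$, one passes to the proper basic factorization $G/SH$ and applies the inductive hypothesis, using $(SH)_\eta=H_\eta$. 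Example \ref{remIstar} shows that unimodularity of $Z_\eta$ is not automatic, so some argument of this shape is genuinely needed rather than the appeal to algebraicity of $H_\eta$ you sketch.
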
 

\par This result will be applied to a 
lattice counting problem in a subsequent 
publication \cite{KSS1}.

\section{Homogeneous real spherical spaces}\label{Section 2}

Real Lie groups in this paper will be denoted by upper case Latin letters, $A$, $B$ etc, and 
their associated Lie algebras with the corresponding lower case Gothic letter $\af$, $\bmf$ etc.
The identity component of a real Lie group $A$ is denoted by $A_0$. 

\par Throughout we assume that $G$ is an algebraic real reductive group, i.e.  
there is a connected complex reductive group $G_\C$ with Lie algebra
$\gf_\C=\gf\otimes_\R \C$ such that $G\subset G_\C$.

\par Let $H<G$ be a subgroup with finitely many components and form the 
homogeneous space $Z=G/H$. 
We assume that $Z$ is {\it real algebraic}, i.e.~there is complex 
algebraic subgroup $H_\C <G_\C$ such that $G\cap H_\C =H$. 
Note that $Z_\C:=G_\C/H_\C$ carries naturally the structure of a complex $G_\C$-variety for which we 
have a $G$-equivariant embedding 
$$Z\hookrightarrow Z_\C, \ \ gH\mapsto gH_\C\, .$$
Observe that $Z$ is a union of connected components (with respect to the Euclidean topology) 
of the real points $Z_\C(\R)$ of $Z_\C$.

\par With the letter $P$ we denote a minimal parabolic subgroup $P<G$. 
We call $Z$ {\it real spherical} provided that  
$P$  admits an  open orbit on $Z$. We assume this, and that $P$ is chosen so that
$PH$ is open in $G$. 
\par For a reductive Lie algebra $\gf$ we write $\gf_{\mathrm n}$ for the direct sum of the non-compact 
ideals in $[\gf,\gf]$. If $\gf$ is the Lie algebra of $G$, then $G_{\mathrm n}$ denotes the
corresponding connected normal subgroup of $[G,G]$.

We recall the local structure theorem of \cite{KKS}: There exists a unique parabolic subgroup 
$Q\supset P$ with Levi-decomposition $Q= LU $ such that: 
\begin{itemize} 
\item $QH=PH$. 
\item $L_{\mathrm n} \subset Q\cap H = L\cap H $. 
\item The map
\begin{equation}\label{LST} Q \times_L (L/L\cap H) \to P\cdot z_0, \ \ [q,l (L\cap H)]\mapsto ql H 
\end{equation}
is a $Q$-equivariant diffeomorphism.
\end{itemize}

Note that $D:=L/L_{\rm n}$ is a Lie group with
compact Lie algebra $\df$. Observe that 
$\df:=\zf(\lf)+\lf_{\rm c}$ where $\zf(\lf)$ is the center of $\lf$
and $\lf_{\rm c}$ 
the direct sum of all compact simple ideals in $\lf$. 
Hence $L/L\cap H$ is a homogeneous space for the group $D$, say  $D/C$, 
and it follows from (\ref{LST}) that the map 
\begin{equation}\label{eq2} 
U \times D/C\to P\cdot z_0\end{equation}
is a diffeomorphism. 
As $D$ is algebraic it decomposes as  a product of subgroups
\begin{equation}\label{decomp D}
 D=A_D \times M_D
\end{equation}
with $A_D\simeq 
(\R^+)^n$ a connected real torus and $M_D$ a compact group (observe that $A_D$ is the connected component of an algebraic torus). As $C$ is an 
algebraic subgroup of $D$ we have $C=A_C\times M_C$ with $A_C=A_D\cap C$ 
and $M_C=M_D\cap C$.
We set $A_Z:=A_D/A_C$ and $M_Z=M_D/M_C$ 
and record 
$$D/C \simeq A_Z \times M_Z$$
Then $\dim\af_Z$ is an invariant of $Z$ which we call its {\it real rank}, see \cite{KKS}.

Let $L=K_LA_L N_L$ be an Iwasawa decomposition of $L$, and let
$G=KAN$ be an Iwasawa decomposition of $G$ which is compatible,
that is, $K_L=K\cap L$, $A=A_L$ and $N_L=N\cap L$.  We denote by $M$ the centralizer of 
$\af$ in $K$, and by
$\theta$ an algebraic Cartan involution of $G$ for which
$K$ is the set of $\theta$-fixed points.

Write $\Sigma^+\subset \af^*$
for the set of positive roots attached to $\nf$ and set 
$$\af^{-}=\{X \in \af\mid (\forall \alpha \in \Sigma^+)\  \alpha(X)\leq 0\}$$
for the closure of the negative Weyl chamber.

\par Set $A_H:=A\cap H$, $M_H = M \cap H$ and note that 
$A_Z \simeq A/A_H$. Note that $A_Z=\exp(\af_Z)$ for
$\af_Z=\af/\af\cap\hf$, and that $A_H$ is connected
as $A$ has no torsion elements.
In the introduction we realized $A_Z$ as a subgroup of $A$ with Lie algebra 
$\af_Z\subset \af$. As that requires the choice of a complement,
the realization as a quotient is preferable.

\subsection{The compression cone and the polar decomposition}

Let $Z=G/H$ be a homogeneous real spherical space. 
We recall here a few results of \cite{KKSS}, Sect 5, 
about the compression (or valuation) cone $\af_Z^-\subset \af_Z$ of $Z$.

\par Note that $P_\C H_\C \subset G_\C$ is a Zariski open which is affine 
by the local structure theorem. We denote by ${\mathcal P}_{++}$ the monoid of regular 
functions on $G_\C$, which are real valued on $G$, and  whose zero set is 
$G_\C \bs P_\C H_\C$. Attached to 
$f \in {\mathcal P}_{++}$ are algebraic characters $\psi_f, \chi_f$ of $(H_\C)_0$ and $P_\C$   
such that 
$$f(ph) =\chi_f(p) \psi_f(h) f({\bf 1})\qquad (h \in (H_\C)_0, p \in P_\C)\, .$$
Every $f\in {\mathcal P}_{++}$ gives rise to a finite dimensional irreducible 
real representation $V_f:=\Span_\R \{ R(G)f\} \subset \R[G]$  with $R(g)f:= f(\cdot g)$. 
We write $v_H:=f \in V_f$ and note that $[v_H]\in \Pb(V_f)$  
can be chosen such that it is $H$-fixed (see \cite{KKSS}, Lemma 3.11(b)).
Let $V_f^*$ be the dual of $V_f$ and let $v_0^*\in V_f^*$ be the evaluation $h\mapsto h(\1)$,
then $[v_0^*]\in \Pb(V_f^*)$ is $P$-fixed, hence a highest weight vector.
Let $v_0\in V_f$ be a lowest weight vector, normalized such that $v_0^*(v_0)=1$. 
Assuming also $f(\1)=1$ we obtain
\begin{equation}\label{m-coeff}  
f(g) = v_0^*( g\cdot v_H) \qquad (g\in G)
\end{equation} 
and in particular for $a \in A$
\begin{equation*}
a\cdot v_0 = \chi_f(a) v_0, \quad
a\cdot v_0^* = \chi_f(a)^{-1} v_0^*\,. 
\end{equation*}
Hence $\lambda_f:= d\chi_f({\bf 1})\in \af^*$ is the lowest weight of $V_f$.

Define 
\begin{equation*} 
\af_f^{--}:=\{ X\in\af_Z\mid \lim_{t\to \infty}  [\exp(tX)v_H]=[v_0]\}\,\end{equation*}
and note that this is well defined as $[v_H]$ is $H$-fixed.
It follows from \cite{KKSS} 
that $\af_f^{--}$ is independent of the choice of 
$f \in {\mathcal P}_{++}$, hence we denote it $\af_Z^{--}$. 
The closure of $\af_Z^{--}$ is denoted by $\af_Z^-$ and we refer 
to it as the {\it compression cone} of $Z$.  Set $A_Z^-:=\exp(\af_Z^-)<A_\C/ A_\C\cap H_\C$.

\begin{rmk}\label{dcone} 
(See \cite{KKSS}, Sect. 5) The cone $\af_Z^-$ is finitely generated.  For $f\in {\mathcal P}_{++}$ 
note that $[v_H]\in \Pb(V_f)$ was chosen to be
an $H$-fixed element. Expand  
$$ v_H:= \sum_{\mu \in \af^*}  v_\mu $$ 
into $\af$-weight spaces.  
Let $\Lambda_f\subset \af^*$ be the set of $\nu\in \af^*$ for which 
$v_{\lambda_f +\nu}\neq 0$.  Note that $\Lambda_f$ is naturally a subset of 
$\af_Z^*=\af_H^\perp\subset \af^*$. 
Then for $X\in \af_Z$ it is immediate from the definition that 
\begin{equation} \label{Lambdanu} 
X \in \af_f^{--} \iff \nu(X) <0 \quad \hbox{for all $\nu \in \Lambda_f\setminus\{0\}$}\, .\end{equation}
\end{rmk}

Write $\Sigma_\uf \subset \Sigma^+$ for the roots with root 
spaces in $\uf$.  For a root $\alpha\in \Sigma$ we denote 
by $\alpha^\vee\in \af$ the corresponding co-root.
Observe that $\Lambda_f \subset \N_0[\Sigma_\uf]$ and that 
$\lambda_f(\alpha^\vee) <0$ for all $\alpha\in \Sigma_\uf$ 
for $f \in {\mathcal P}_{++}$.  

\par In this paper we use a refinement of the polar decomposition of $Z$ obtained in \cite{KKSS}.
We recall from \cite{KKSS} the group 
$$J:=\{ g\in G\mid P_\C H_\C g = P_\C H_\C\}$$
which has the properties that $N_G(H)<J$ and $J/N_G(H)$ is compact. Further it was shown that there 
exists an irreducible finite dimensional representation $(\pi, V)$ of $G$ with $J$-fixed vector $v_J$ such that 
$$ Z_J:=G/J \to {\mathbb P}(V), \ \ gJ \mapsto [\pi(g)v_J]$$ 
defines an embedding.  The closure  $\oline Z_J$ of $Z_J=G/J$ in ${\mathbb P}(V)$ was referred to as 
simple compactification of $Z_J$ in \cite{KKSS}.  The decisive property was that $\oline Z_J $ featured a unique 
closed $G$-orbit.  Passing from $Z_J$ to $Z$ is technically a bit cumbersome as the component group 
of $J$ is not explicitely  known.  Instead of using $J$ we prefer here to use a more manageable subgroup $H_1<G$
which features almost the same properties as $J$. 
In order to define $H_1$ let $\af_{Z,E}=\af_Z^-\cap (-\af_{Z}^-)$ be the 
edge of $\af_Z^-$ which we realize as a subspace of $\af$ via the Cartan-Killing form, i.e. $\af_Z\simeq \af_H^\perp \subset \af$. 
Note that $\af_{Z,E}$ normalizes $\hf$ and therefore $\hf_1:=\hf +\af_{Z,E}$ defines a subalgebra. 
We let $H_{1,\C}$ be the connected algebraic subgroup of $G_\C$ with Lie algebra $\hf_{1,\C}$ and set $H_1:=G\cap H_{1,\C}$.
Observe that $H_1<N_G(H)$ and  $N_G(H)/H_1$  is compact. Moreover with $A_{Z,E}=\exp(\af_{Z,E})$ we note that 
hat $H_1/HA_{Z,E}$ is a finite group.  Let $F_0\subset H_1$ be a set of representatives of $H_1/HA_{Z,E}$.

Set $Z_1:=G/H_1$ and observe that  $\af_{Z_1}=\af_Z/\af_{Z,E}$ and
$\af_{Z_1}^-=\af_Z^-/\af_{Z,E}$. 
Then, according to \cite{KK} Sect. 7, there exists 
a finite dimensional (not necessarily irreducible) representation $\pi: G\to V$ with $H_1$-spherical 
vector $v_{H_1}$ such that $Z_1\to {\mathbb P}(V), \ gH_1\mapsto [\pi(g)v_{H_1}] $ embeds into projective space. Moreover, 
the closure $\oline Z_1$ has a unique closed $G$-orbit.  This is all what is needed 
to derive in analogy to \cite{KKSS},  Section 5,  the {\it polar decomposition} 
of $Z_1$: 
\begin{equation} \label{POLARa} 
Z_1 = \Omega A_{Z_1}^- F_1 \cdot z_{0,1} \end{equation}
where $\Omega\subset G$ is a compact set of the form
\begin{equation} \label{POLAR1} 
\Omega = F' K \end{equation}
for a finite set $F'$, and $F_1\subset G$ is a finite set such that 
\begin{equation} \label{POLAR2a} 
F_1\cdot z_{0,1} \subset \exp(i\af_{Z_1}) \cdot z_{0,1}\subset Z_{1,\C} \, .\end{equation}
Let us mention that $F_1$ is a set of representatives for the open $P$-orbits on $Z_1$. 

\par It is now a simple matter to lift the polar decomposition from $Z_1$ to $Z$ (see \cite{KKS2}, Section 3.4 and in particular Lemma 3.4). 
We obtain
\begin{equation} \label{POLAR} 
Z = \Omega A_{Z}^- F \cdot z_0 \end{equation}
where $\Omega\subset G$ is as above 
and $F\subset G$ is a finite set such that 
\begin{equation} \label{POLAR2} 
F\cdot z_0 \subset T_Z F_0\cdot z_0\subset Z_\C \end{equation}
with $T_Z:=\exp(i \af_H^\perp)<\exp(i\af)$. 

Note that (\ref{POLAR2}) implies that $Px\cdot z_0$ is open in 
$Z$ for each $x\in F$. Indeed, if $g\in T_Z  N_G(H)$ 
then 
\begin{equation}\label{F-spherical}
\pf_\C+\Ad(g)\hf_\C=\gf_\C\,.
\end{equation}

\subsection{The limiting subalgebra}

In order to discuss volume growth we need 
another property of the compression 
cone. Let $\oline \uf=\theta(\uf)$ and define
\begin{equation*}
\hf_{\rm lim} = \oline \uf + \hf\cap\lf\,,
\end{equation*}
then $\hf_{\rm lim}$ is a spherical subalgebra with
$d=\dim \hf = \dim \hf_{\rm lim}$. 
This follows from the decompositions
$$\gf=\hf\oplus \af_Z\oplus \mf_Z\oplus \uf=\oline\uf\oplus (\hf\cap\lf)\oplus \af_z\oplus \mf_Z\oplus \uf.$$
By the same decompositions we find a linear map $T:\oline\uf \to \pf$ 
such that
\begin{equation}\label{Tgraph}
 W+Y\mapsto W+Y+T(Y),\qquad W\in \lf\cap\hf, Y\in\oline\uf
\end{equation}
provides
a linear isomorphism
$\hf_{\rm lim} \to  \hf\, .$

\begin{rmk}
Let $d=\dim\hf$. We write 
$\Gr_d(\gf)$ for the Grassmannian of $d$-dimen\-sional 
subspaces of the real vector space $\gf$ and
recall from \cite{KKSS}, Sect.~5, the following characterization of 
the compression cone. 

Let $X\in \af_Z$. The following are 
equivalent: 
\begin{enumerate}
\item $X\in \af_Z^{--}$. 
\item $\lim_ {t\to \infty} e^{t\ad X} \hf = 
\hf_{\rm lim}$ in $\Gr_d(\gf)$. 
\end{enumerate}
\end{rmk}

\subsection{Quasiaffine real spherical spaces}

A real spherical space is called quasi-affine if there is a non-zero $G$-equivariant 
rational map $Z \to V$ for a rational real $G$-module $V$. 
If we denote by ${\mathcal P}_{++, \bf 1}$ the subset of ${\mathcal P}_{++}$ which corresponds to right 
$H_\C$-invariant functions, then 
quasi-affine means that ${\mathcal P}_{++, {\bf 1}}\neq \emptyset$. 

\par Note that it is not such a severe restriction to assume that $Z$ is quasiaffine:
for $f\in {\mathcal P}_{++}$ with characters $(\chi_f, \psi_f)$ one obtains 
a quasiaffine space $Z_1 =G_1/ H_1$ with 
$G_1 = G \times \R^\times$ and $H_1:=\{ (h, \psi_f^{-1}(h))\mid h \in H\}$.  
Observe that $Z_1$ is a $G$-space and that there is a natural $G$-fibering 
$$ \R^\times \to Z_1 \to Z\, .  $$
Furthermore we have the following relation between compression cones:
\begin{equation}\label{quasiaffine cone} 
\af_{Z_1}^-= \af_Z^-\oplus \R\, .\end{equation}

\section{Weight functions}


Recall the notion of weight function from \cite{B}: A positive 
function $w$ on $Z$ is called a {\it weight} provided for all compact sets $B$ there 
exists a constant $C_B\ge 1$ such that 
$$ w(g\cdot z) \leq C_B w(z) \qquad (z\in Z, g\in B)\, .$$

Two positive functions $w_1, w_2$ on $Z$ are called {\it equivalent} if the 
quotient ${w_1}/ {w_2}$ is bounded from above and below by positive constants. 

For later reference, we note that if $w$ is a weight then so is $\frac1w$,
and likewise the function
\begin{equation}\label{wtilde}
\tilde w:=\max\{ \log w, c-\log w\},
\end{equation}
when $c>0$ is a constant.

When $Z$ is quasi-affine one can construct weight functions on it from finite dimensional 
representations. 

\begin{lemma}\label{rep weight}
Let $Z$ be quasi-affine, and let $(\pi,V)$ be a finite dimensional irreducible representation with a non-zero
$H$-fixed vector $v_H$. 
\begin{enumerate}
\item The function 
$w(z)=\|g\cdot v_H\|$ for $z=g\cdot z_0\in Z$ is a weight.
\item Assume $V=V_f$ for $f\in{\mathcal P}_{++, {\bf 1}}$
and let $w_f=w$ as in {\rm (1)}.
Let $B\subset G$ be compact.
Then there exist $C_1,C_2>0$ such that 
$$C_1\chi_f(a)\le w_f(bax\cdot z_0) \le C_2\chi_f(a)$$
for all $b\in B$, $a\in A_Z^-$, and $x\in F$. 
\end{enumerate}
\end{lemma}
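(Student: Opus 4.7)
The plan is to prove (1) directly from the hypotheses on $v_H$ and the continuity of $\pi$, and to prove (2) via the lowest weight decomposition of $v_H$ in $V_f$ combined with the structure of the polar decomposition and the finiteness of $F$.

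For (1), if $gz_0=g'z_0$ then $g^{-1}g'\in H$, and since $v_H$ is $H$-fixed we have $\|\pi(g')v_H\|=\|\pi(g)v_H\|$, so $w$ is well-defined on $Z$. The weight inequality follows from $\|\pi(g_1)\pi(g)v_H\|\le \|\pi(g_1)\|_{\mathrm{op}}\cdot\|\pi(g)v_H\|$ and the bound $C_B:=\sup_{g_1\in B}\|\pi(g_1)\|_{\mathrm{op}}<\infty$ arising from continuity of $\pi$ and compactness of $B$.

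For (2), fix a $K$-invariant Hermitian inner product on the complexification $V_{f,\C}$, with respect to which $\af$-weight spaces are mutually orthogonal. Using the decomposition $v_H=\sum_{\nu\in\Lambda_f}v_{\lambda_f+\nu}$ from Remark~\ref{dcone}, with $v_{\lambda_f}=v_0$ by the normalization $v_0^*(v_H)=f(\mathbf 1)=1$, for $a=\exp X$ with $X\in\af_Z^-$ we compute
$$a\cdot v_H=\chi_f(a)\Bigl(v_0+\sum_{\nu\in\Lambda_f\setminus\{0\}}e^{\nu(X)}v_{\lambda_f+\nu}\Bigr).$$
By the closure version of the characterization in Remark~\ref{dcone}, $\nu(X)\le 0$ for every $\nu\in\Lambda_f$, so each exponential factor is $\le 1$. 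The triangle inequality gives the upper bound $\|a\cdot v_H\|\le \chi_f(a)\sum_\nu\|v_{\lambda_f+\nu}\|$, while orthogonal projection onto the one-dimensional lowest weight line yields the lower bound $\|a\cdot v_H\|\ge \chi_f(a)\|v_0\|$.

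To incorporate $x\in F$, use \eqref{POLAR2} to write $x\cdot z_0=t_xf_{0,x}\cdot z_0$ in $Z_\C$ with $t_x\in T_Z$ and $f_{0,x}\in F_0\subset H_1$, equivalently $x=t_xf_{0,x}h$ for some $h\in H_\C$. Since $\psi_f\equiv 1$ in the quasi-affine setting the vector $v_H$ is $H_\C$-fixed, so $x\cdot v_H=t_xf_{0,x}\cdot v_H$; since $f_{0,x}$ normalizes $H_\C$, the vector $f_{0,x}\cdot v_H$ is again $H_\C$-fixed, and by multiplicity-freeness in the irreducible spherical representation $V_f$ it equals $c_xv_H$ for some $c_x\neq 0$. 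As $t_x\in\exp(i\af)$ acts by phases on each $\af$-weight space, it is unitary for the chosen inner product, whence $\|ax\cdot v_H\|=|c_x|\cdot\|a\cdot v_H\|$; finiteness of $F$ yields uniform upper and lower bounds on $|c_x|$. Finally, $\|\pi(b)\|_{\mathrm{op}}$ and $\|\pi(b)^{-1}\|_{\mathrm{op}}$ are uniformly bounded on the compact set $B$, and the conclusion follows. The principal obstacle is the step $f_{0,x}\cdot v_H\in\C v_H$, which rests on multiplicity-freeness for spherical $V_f$; should this be unavailable, one argues instead that each $f_{0,x}\cdot v_H$ admits an $\af$-weight decomposition with weights in $\lambda_f+\Lambda_f$ (using that $f_{0,x}$ normalizes both $H$ and $A_{Z,E}$, together with the fact that $\nu|_{\af_{Z,E}}=0$ for all $\nu\in\Lambda_f$, since $\af_{Z,E}=\af_Z^-\cap(-\af_Z^-)$), so that the estimates transfer verbatim to the finite family $\{f_{0,x}\cdot v_H:x\in F\}$.
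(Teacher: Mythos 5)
Your proof is correct and follows essentially the same route as the paper: reduce the compact set $B$ via operator-norm bounds, expand $a\cdot v_H$ into orthogonal $\af$-weight spaces and use \eqref{Lambdanu} on $A_Z^-$ for the two-sided estimate, and treat $x\in F$ via \eqref{POLAR2} by writing $x$ as an element of $T_Z$ times a normalizer element times an element of $H_\C$, with $T_Z$ acting by phases. The proportionality $f_{0,x}\cdot v_H\in\C v_H$ that you flag as the principal obstacle is precisely the step the paper itself uses (it simply asserts $h_1x_2\cdot v_H=c_0v_H$ without further comment), so your fallback argument is a harmless extra safeguard.
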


\begin{proof} (1) is just because $\pi$ is bounded.
For (2) we first observe that (1) implies
$$C_1 w(a)= C_1 w(b^{-1}ba) \le
w(ba)\le C_2 w(a).$$
Hence we may assume $B=\{1\}$.
Next we observe that by Remark \ref{dcone}
$$ a \cdot v_H = \chi_f(a) \left(v_{\lambda_f} + \sum_{\nu\in\Lambda_f\bs \{0\}} a^\nu v_{\lambda_f+\nu}\right),$$
which is an orthogonal sum. 

Finally we recall from (\ref{POLAR2})
that elements $x$ of the finite set $F$ can be written as $x=x_1 x_2$ with $x_1=t_1 h_1$ 
with $t_1\in T_Z$, $h_1\in  H_\C$,
and  $x_2\in N_G(H)$.
Observe that $h_1x_2\cdot v_H = c_0v_H$ for a constant $c_0$. 
Hence 
$$ ax \cdot v_H = c_0 \chi_f(at_1) \left(v_{\lambda_f} + \sum_{\nu\in\Lambda_f\bs \{0\}} (at_1)^\nu v_{\lambda_f+\nu}\right),$$
from which it follows that
$$|c_0|\,\chi_f(a) \|v_{\lambda_f}\|\le 
\|ax \cdot v_H \| \le |c_0|\, \chi_f(a) \|v_H\|$$
for $a\in A_Z^-$. Here the second inequality is
obtained with (\ref{Lambdanu}).
\end{proof}

\begin{prop}\label{uniform A-ball}
Let $B_1,B_2\subset G$ be compact sets. 
There exists a 
compact set $B_A \subset A_Z$ such that 
$a_1a_2^{-1}\in B_A$ for all pairs $a_1,a_2\in A_Z^-$
with $B_1a_1x_1\cdot z_0 \cap B_2a_2x_2\cdot z_0 \neq \emptyset$
for some $x_1,x_2\in F$.
\end{prop}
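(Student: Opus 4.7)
The strategy is to translate the compactness claim for $a_1 a_2^{-1}$ into two-sided bounds on finitely many characters $\chi_f$ by means of the weight functions $w_f$ of Lemma \ref{rep weight}.

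First I would reduce to the quasi-affine case. If $Z$ fails to be quasi-affine, replace it by $Z_1 = G_1/H_1$ with $G_1 = G \times \R^\times$ as constructed just after (\ref{quasiaffine cone}); there $\af_{Z_1}^- = \af_Z^- \oplus \R$ and the $G$-equivariant fibration $\R^\times \to Z_1 \to Z$ allows one to lift the data $B_1,B_2,a_1,a_2,x_1,x_2$ to $Z_1$ and project a compact bound in $A_{Z_1}$ down to a compact bound in $A_Z$.

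Assuming now that $Z$ is quasi-affine, I would select finitely many $f_1,\dots,f_N \in {\mathcal P}_{++,\1}$ whose restricted weights $\lambda_{f_i}|_{\af_Z}$ span $\af_Z^*$, or equivalently such that the product map
$$\Phi \colon A_Z \to (\R^+)^N,\qquad a \mapsto \bigl(\chi_{f_1}(a),\dots,\chi_{f_N}(a)\bigr),$$
is a proper embedding. With this in hand, suppose $z \in B_1 a_1 x_1 \cdot z_0 \cap B_2 a_2 x_2 \cdot z_0$. For each $i$, Lemma \ref{rep weight}(2) applied with $B = B_1 \cup B_2$ produces constants $C_{i,1},C_{i,2}>0$, independent of the pair $(a_1,a_2)$, such that
$$C_{i,1}\,\chi_{f_i}(a_j) \le w_{f_i}(z) \le C_{i,2}\,\chi_{f_i}(a_j)\qquad (j=1,2).$$
Dividing the two chains of inequalities (for $j=1$ and $j=2$) gives a uniform two-sided bound on $\chi_{f_i}(a_1 a_2^{-1}) = \chi_{f_i}(a_1)/\chi_{f_i}(a_2)$ for every $i$. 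Properness of $\Phi$ then forces $a_1 a_2^{-1}$ into a compact set $B_A \subset A_Z$, which depends only on $B_1,B_2$ and the finite set $F$.

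The main obstacle lies in the second step: exhibiting enough characters $\chi_f$ with $f\in {\mathcal P}_{++,\1}$ so that their differentials jointly span $\af_Z^*$. This is not a direct calculation but a structural input from the Luna--Vust type theory for (real) spherical varieties, namely that the lattice generated by the weights of $H_\C$-invariant vectors in ${\mathcal P}_{++}$ is of full rank in the character lattice of $A_Z=A/A_H$. Once this is granted, the remainder of the argument is a formal manipulation of Lemma \ref{rep weight}.
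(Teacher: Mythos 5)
Your proposal is essentially the paper's own proof: apply Lemma \ref{rep weight}(2) to a point of the intersection to get two-sided bounds $C_1\chi_f(a_i)\le w_f(z)\le C_2\chi_f(a_i)$, divide to trap $\chi_f(a_1a_2^{-1})$ in a compact set, and conclude from the fact that the weights $\lambda_f$, $f\in{\mathcal P}_{++,{\bf 1}}$, have full rank $\dim\af_Z$ --- the structural input you flag as needing to be granted is precisely what the paper cites from \cite{KKS}, Remark 3.5. Your preliminary reduction to the quasi-affine case via (\ref{quasiaffine cone}) is also in the spirit of the paper (it performs that reduction in the proof of Proposition \ref{w is a weight}), so this is the same route, not a different one.
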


\begin{proof} It follows from Lemma \ref{rep weight} that
for each $f\in {\mathcal P}_{++, {\bf 1}}$
there exist  $C_1,C_2>0$ such that
$$C_1 \chi_f(a_i) \le w(z) \le C_2 \chi_f(a_i), \quad i=1,2,$$
for all $z\in B_1a_1x_1\cdot z_0 \cap B_2a_2x_2\cdot z_0$.
Hence  if this set is non-empty then $\chi_f(a_1a_2^{-1})$ belongs to a compact 
neighborhood of $1$.

We recall from \cite{KKS} Remark 3.5,
that the semigroup generated by the $\lambda_f$ 
has rank $\dim \af_Z$. The proposition follows.
\end{proof}

We are mainly interested in the compact sets $B\subset G$ which satisfy
the polar decomposition
\begin{equation} \label{B-POLAR} 
Z = B A_Z^- F\cdot z_0\, .
\end{equation}
and recall from (\ref{POLAR}) that this is the case when $B=\Omega$.
By a {\it ball} in a Lie group 
we understand a compact symmetric neighborhood of 
the neutral element. 
The following is then an immediate consequence of 
Proposition \ref{uniform A-ball}.

\begin{cor}\label{uniform A-ball cor}
For every compact set $B\subset G$ there exists a ball
$B_A$ in $A_Z$ such that  $BaF\cdot z_0 \subset \Omega(A_Z^-\cap B_A a)F\cdot z_0$
for all $a\in A_Z^-$.
\end{cor}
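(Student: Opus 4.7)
The plan is to take an arbitrary element of $BaF\cdot z_0$, apply the polar decomposition (\ref{POLAR}) to rewrite it in the form $\omega a' x'\cdot z_0$ with $\omega\in\Omega$, $a'\in A_Z^-$ and $x'\in F$, and then invoke Proposition \ref{uniform A-ball} to force $a'a^{-1}$ into a fixed compact subset of $A_Z$ that does not depend on $a$.

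In more detail: suppose $z \in BaF\cdot z_0$, so $z = bax\cdot z_0$ for some $b\in B$ and $x\in F$. By the polar decomposition (\ref{POLAR}) we may also write $z = \omega a' x'\cdot z_0$ for some $\omega\in \Omega$, $a'\in A_Z^-$ and $x'\in F$. In particular,
\begin{equation*}
Ba\{x\}\cdot z_0 \,\cap\, \Omega a'\{x'\}\cdot z_0 \;\ni\; z,
\end{equation*}
so the intersection is nonempty. Applying Proposition \ref{uniform A-ball} with $B_1 = B$ and $B_2 = \Omega$ (which depend only on $B$, not on $a$ or the choices of $x,x'\in F$), we obtain a compact set $B_A'\subset A_Z$ such that $a'a^{-1}\in B_A'$. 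Thus $a'\in A_Z^- \cap B_A' a$, and hence $z\in \Omega(A_Z^-\cap B_A'a)F\cdot z_0$.

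To conclude, I enlarge $B_A'$ to a ball $B_A\subset A_Z$, i.e., replace it by a compact symmetric neighborhood of the identity that contains $B_A'$; this preserves the inclusion and gives the statement of the corollary.

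I do not anticipate any real obstacle: this is a direct packaging of Proposition \ref{uniform A-ball} together with the already-established polar decomposition. The only thing to check carefully is that the compact set produced by Proposition \ref{uniform A-ball} depends only on the two ambient compact sets $B$ and $\Omega$, and not on the specific elements $a,a'\in A_Z^-$ or on the choice of $x,x'$ in the finite set $F$ — but that uniformity is precisely the content of that proposition.
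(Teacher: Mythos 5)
Your argument is correct and is essentially the paper's own: the paper states the corollary as an immediate consequence of Proposition \ref{uniform A-ball}, obtained exactly by the unwinding you give (rewrite a point of $BaF\cdot z_0$ via the polar decomposition (\ref{POLAR}) and apply the proposition with $B_1=B$, $B_2=\Omega$). The only cosmetic point is the orientation: with your labeling the proposition literally gives $a(a')^{-1}\in B_A'$ rather than $a'a^{-1}$, but since $A_Z$ is abelian and you enlarge $B_A'$ to a symmetric ball at the end, this is harmless.
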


We fix a norm $\|\cdot\|$ on $\af_Z$. 

\begin{prop}\label{w is a weight} 
Let $B\subset G$ be a compact set for which the polar decomposition
{\rm (\ref{B-POLAR})}
is valid and define
$$ \mathbf{w} (z)=\mathbf{w}_B(z) := \sup_{\{a\in A_Z^-\mid 
z \in B a F \cdot z_0 \}} 
 \|\log a\|\,,\quad (z\in Z).$$
\begin{enumerate}
 \item The function $W_B(z)=e^{\mathbf{w}_B(z)}$ is a weight on $Z$, and all weights obtained in this 
fashion are equivalent.
\item There exists a constant $C\in\R$ such that
$$\|\log a\|\leq \mathbf{w}(\omega ax\cdot z_0)\le C+\|\log a\|$$
for all $\omega\in B$, $a\in A_Z^-$, $x\in F$.
 \item If $B$ is sufficiently large then $\mathbf{w}$ is a weight.
\end{enumerate}
\end{prop}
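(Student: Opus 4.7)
The plan is to prove (2) first, as its two-sided bound packages the core information needed for (1) and (3). The lower bound $\|\log a\|\le\mathbf{w}(\omega ax\cdot z_0)$ is immediate, since the triple $(\omega,a,x)$ itself is one of the admissible decompositions in the supremum defining $\mathbf{w}$. For the upper bound, suppose $\omega ax\cdot z_0\in Ba'F\cdot z_0$ with $a'\in A_Z^-$; Proposition \ref{uniform A-ball} applied to the pair $(B,B)$ forces $a(a')^{-1}\in B_A$ for a compact $B_A\subset A_Z$ depending only on $B$, so $\|\log a'\|\le\|\log a\|+C$ with $C:=\sup\{\|\log b\|:b\in B_A\}$. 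Taking the supremum over $a'$ yields $\mathbf{w}(\omega ax\cdot z_0)\le\|\log a\|+C$.

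For (1), equivalence of the weights $W_B$ and $W_{B'}$ attached to two compact sets satisfying (\ref{B-POLAR}) follows from the same device: for any $z$ pick some $a^*\in A_Z^-$ realizing a $B'$-decomposition of $z$; Proposition \ref{uniform A-ball} applied to $(B,B')$ then gives $\mathbf{w}_B(z)\le\|\log a^*\|+C\le\mathbf{w}_{B'}(z)+C$, and the symmetric argument yields the reverse, so $|\mathbf{w}_B-\mathbf{w}_{B'}|$ is bounded. For the weight property of $W_B$, take $g$ in a compact $B_0\subset G$ and use (\ref{B-POLAR}) to write $z=\omega ax\cdot z_0$ with $\omega\in B$. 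Then $gz\in(B_0B)aF\cdot z_0$, so applying (2) to the enlarged compact set $B^*:=B\cup B_0B$ gives $\mathbf{w}_{B^*}(gz)\le\|\log a\|+C^*\le\mathbf{w}_B(z)+C^*$; the equivalence just proved transfers this back to $\mathbf{w}_B(gz)\le\mathbf{w}_B(z)+D_{B_0}$, whence $W_B(gz)\le e^{D_{B_0}}W_B(z)$.

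The delicate part is (3): the multiplicative weight property for $\mathbf{w}$ itself demands a uniform positive lower bound on $\mathbf{w}$, and this is what will force a size requirement on $B$. Fix $\epsilon>0$ and choose $v_*\in\af_Z^-$ with $\|v_*\|=\epsilon$ (possible since the compression cone spans $\af_Z$), and put $U:=\{\exp v:v\in\af_Z^-,\ \|v\|\le\epsilon\}$; enlarge $B$ so that $B\supset\Omega\cdot U^{-1}$. For any $z$ with $(\ref{POLAR})$-decomposition $z=\omega af\cdot z_0$, $\omega\in\Omega$, and any $a_0\in U$, the identity $z=(\omega a_0^{-1})(a_0 a)f\cdot z_0$ is a valid $B$-polar decomposition, since $\omega a_0^{-1}\in\Omega U^{-1}\subset B$ and $a_0a\in A_Z^-$ by closure of the cone under addition. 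Hence
$$\mathbf{w}_B(z)\ge\sup\{\|v+\log a\|:v\in\af_Z^-,\ \|v\|\le\epsilon\},$$
and a case split (take $v=0$ if $\|\log a\|\ge\epsilon/2$, and $v=v_*$ otherwise, combined with the triangle inequality $\|v_*+\log a\|\ge\|v_*\|-\|\log a\|$) yields $\mathbf{w}_B(z)\ge\epsilon/2$ uniformly in $z$. Combining this lower bound with the additive estimate $\mathbf{w}(gz)\le\mathbf{w}(z)+D_{B_0}$ obtained in the proof of (1) produces the multiplicative bound $\mathbf{w}(gz)\le(1+2D_{B_0}/\epsilon)\mathbf{w}(z)$, so $\mathbf{w}$ is a weight.

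The main obstacle is precisely this lower bound in (3): a priori $\mathbf{w}_B$ could vanish at points of $Z$ (for instance near $z_0$), and ruling this out requires leveraging the cone structure of $A_Z^-$ to manufacture nontrivial $A_Z^-$-shifts inside the polar decomposition, which is exactly the role played by the enlargement $B\supset\Omega U^{-1}$.
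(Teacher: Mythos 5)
Your proposal is correct and follows essentially the same route as the paper: (1) and (2) rest on Proposition \ref{uniform A-ball} exactly as in the paper's proof, and (3) is obtained, as there, by enlarging $B$ (you use $\Omega U^{-1}$ with $U$ a small piece of the cone, the paper uses $\Omega B_A$ with the constraint $ba\in A_Z^-$) to force a uniform positive lower bound on $\mathbf{w}$, the additive-to-multiplicative step replacing the paper's appeal to the remark around (\ref{wtilde}). The only point the paper makes that you omit is the preliminary reduction to the quasi-affine case via (\ref{quasiaffine cone}) before invoking Proposition \ref{uniform A-ball}, which is harmless given how that proposition is stated.
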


\begin{proof} 
(1) Note that (\ref{B-POLAR})
ensures that the supremum is taken over a non-empty 
subset of $A_Z^-$.
Moreover, as this set is compact, the supremum is finite.

With (\ref{quasiaffine cone}) the proof is easily reduced to the case where $Z$ is 
quasi-affine. 
It then follows from 
Proposition \ref{uniform A-ball}
that if $B_1,B_2$ are compact sets both satisfying
(\ref{B-POLAR}) then there exists  $C>0$ such that
\begin{equation}\label{W_B equivalence}
W_{B_2}(z)\leq CW_{B_1}(z)
\end{equation}
for all $z\in Z$.

It easily follows from (\ref{W_B equivalence}) and the identity
$$W_B(g\cdot z)=W_{g^{-1}B}(z), \qquad g\in G, z\in Z.$$
that $W_B$ is a weight. 
The equivalence also follows from (\ref{W_B equivalence}).

(2)
The first inequality is clear. 
For the second we apply Proposition \ref{uniform A-ball}
with $B_1=B_2=B$. Let $z=\omega a x\cdot z_0$
and let $\omega'\in B$, $a'\in A_Z^-$ and $x'\in F$
be such that $z=\omega'a'x'\cdot z_0$ 
and $\mathbf{w}(z)=\|\log a'\|$.
Then $a'a^{-1}\in B_A$ and hence $\|\log a'\|\le
C+ \|\log a\|$ for some $C>0$ depending
only on the size of $B_A$.

(3) If $W$ is a weight on $Z$ and $\inf_{z\in Z} W(z)>1$, then $\log W$ is also a weight
(see (\ref{wtilde})).
Hence we only need to prove $\inf_{z\in Z} \mathbf{w}_B(z)>0$.
We assume that $B$ contains $\Omega B_A$ where $B_A$ is a ball in $A$.
We obtain
for each $z\in Z$ that $z=kax\cdot z_0$ for some $k\in \Omega$, $a\in A_Z^-$, and $x\in F$.
Hence it follows from the identity $z=k b^{-1}bax\cdot z_0$ that 
$$\mathbf{w}(z)\ge \sup_{\{b\in B_A\mid ba\in A_Z^-\}}\|\log (ba)\|.$$ 
This implies the uniform positive lower bound for $\mathbf{w}$.
\end{proof}

\section{Volume growth}
In this section and the following
we assume that $Z$ is a unimodular real spherical space. 
We are interested in effective upper and lower bounds of 
\begin{equation*}
\mathbf{v}_B(g)= \vol_Z(Bg\cdot z_0) 
\end{equation*}
in dependence of $g \in G$, for a fixed ball $B$. 

\begin{lemma}\label{v is a weight}
The function $\mathbf{v}_B$ is a weight and its equivalence class is independent of the ball $B$. 
\end{lemma}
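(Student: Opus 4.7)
The plan is to check both assertions directly from the invariance of the measure on $Z$ (which exists and is $G$-invariant by the unimodularity hypothesis) together with a standard finite-covering argument.

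First I would verify that $\mathbf{v}_B$ is well-defined as a function on $Z$: since $Bgh\cdot z_0 = Bg\cdot z_0$ for all $h\in H$, the definition $\mathbf{v}_B(g)=\vol_Z(Bg\cdot z_0)$ descends to $Z=G/H$. To show the weight property, I fix a compact set $B'\subset G$ and $g'\in B'$. Then $Bg'\subset BB'$, which is compact. Because $B$ is a ball (hence an open neighborhood of $\mathbf{1}$), the collection $\{hB\mid h\in BB'\}$ is an open cover of $BB'$, from which I extract a finite subcover $BB'\subset\bigcup_{i=1}^N h_i B$. Then
\begin{equation*}
Bg'g\cdot z_0\subset BB'g\cdot z_0\subset\bigcup_{i=1}^N h_iBg\cdot z_0,
\end{equation*}
and by $G$-invariance of $\vol_Z$ (which is where unimodularity of $Z$ enters),
\begin{equation*}
\mathbf{v}_B(g'g\cdot z_0)\le\sum_{i=1}^N\vol_Z(h_iBg\cdot z_0)=N\,\mathbf{v}_B(g\cdot z_0).
\end{equation*}
The constant $N=N(B,B')$ depends only on $B$ and $B'$, so $\mathbf{v}_B$ is a weight.

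For the independence of the equivalence class, I would argue symmetrically: given two balls $B_1,B_2\subset G$, cover $B_1$ by finitely many left translates of $B_2$, say $B_1\subset\bigcup_{i=1}^{N_1}h_iB_2$, and vice versa $B_2\subset\bigcup_{j=1}^{N_2}h_j'B_1$. Applying the same subadditivity and $G$-invariance argument to $B_1g\cdot z_0\subset\bigcup_i h_iB_2g\cdot z_0$ gives $\mathbf{v}_{B_1}(z)\le N_1\,\mathbf{v}_{B_2}(z)$, and by symmetry the reverse bound, establishing equivalence.

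There is no real obstacle here; the only point to be careful about is that the $G$-invariance of the volume measure—needed in the identity $\vol_Z(h_iBg\cdot z_0)=\vol_Z(Bg\cdot z_0)$—is precisely guaranteed by the standing unimodularity assumption on $Z$ made at the start of the section.
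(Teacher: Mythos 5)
Your argument is correct. Note, though, that the paper does not prove this lemma itself: its ``proof'' is a single reference to Bernstein \cite{B}, Lemma-Definition 3.3, so your contribution is a self-contained version of the standard covering argument that lies behind that citation. The mechanism you use --- covering $BB'$ (resp.\ $B_1$) by finitely many left translates $h_i B$ (resp.\ $h_iB_2$), then applying subadditivity and left $G$-invariance of $\vol_Z$ --- is exactly the expected one, and the unimodularity hypothesis enters precisely where you say it does, through the existence of the invariant measure. Two small points of hygiene: a ball in this paper is a \emph{compact} symmetric neighborhood of $\mathbf 1$, so it is not open; you should cover by the translates $h_i\,\mathrm{int}(B)$ of its interior (which still contains $\mathbf 1$), and the finite subcover argument goes through unchanged. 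Also, since a weight is by definition a \emph{positive} function and the equivalence statement requires the ratio $\mathbf v_{B_1}/\mathbf v_{B_2}$ to be bounded above and below by positive constants, you should record that $0<\mathbf v_B(z)<\infty$ for all $z$: finiteness because $Bg\cdot z_0$ is compact, positivity because $\mathrm{int}(B)g\cdot z_0$ is a nonempty open subset of $Z$ (the orbit map $G\to Z$ is open). With these cosmetic additions your proof is complete and arguably more useful to a reader than the bare citation in the paper.
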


\begin{proof} See \cite{B}, Lemma-Definition 3.3 and its proof.
\end{proof}

\par In the sequel we  drop the index $B$ and write $\mathbf{v}$ instead of $\mathbf{v}_B$.  
Effective bounds for ${\bf v}$ allow a characterization of the  
tempered spectrum of $L^2(Z)$ as we know from \cite{B} and explicate in the next section
when we define the Harish-Chandra Schwartz space $\mathcal{C}(Z)$ of $Z$.  

\par We shall give lower and upper bounds on  $\vol_Z(Ba\cdot z_0)$ for $a$
in $A_Z^-$.  
As before we let  $Q\supset P$ 
so that 
there is an isomorphism (\ref{eq2})
$$ U \times M_Z \times A_Z \simeq P\cdot z_0\, .$$
In the sequel we view $M_Z\subset P\cdot z_0$.
We define  
$\rho_\uf\in \af^*$ by $\rho_\uf (X) = \frac12 \tr_\uf \ad X $
and note that

\begin{lemma}\label{rhou}
 $\rho_\uf=0$ on $\af_H$.
\end{lemma}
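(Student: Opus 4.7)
The plan is to derive $\rho_\uf(X)=0$ for $X\in\af_H$ from the vanishing of a single trace on $\hf$, using unimodularity of $Z$ together with the local structure theorem and the description of $\hf$ via the graph map in \eqref{Tgraph}.

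First, I would note that because $G$ is reductive (hence unimodular), the $G$-invariance of the measure on $Z$ forces $H$ to be unimodular, which at the Lie algebra level says $\tr(\ad Y|_{\hf})=0$ for every $Y\in\hf$. Since $\af_H=\af\cap\hf\subset\af\subset\lf$, we have in particular $\af_H\subset\hf\cap\lf\subset\hf$, so this applies to $X\in\af_H$.

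Next, I would compute this trace by decomposing $\hf$ along the pair $(\hf\cap\lf,\,\oline\uf)$. By \eqref{Tgraph},
$\hf = (\hf\cap\lf)\oplus\{Y+T(Y)\mid Y\in\oline\uf\}$,
and $\hf\cap\lf$ is stable under $\ad X$ because $X\in\lf\cap\hf$ and $\lf$ is a subalgebra. For the quotient, I would observe that the projection $\gf=\oline\uf\oplus\lf\oplus\uf\to\oline\uf$ sends $W+Y+T(Y)$ to $Y$ and restricts to a linear isomorphism $\hf/(\hf\cap\lf)\to\oline\uf$. Since $X\in\af$ preserves each summand of the triangular decomposition under $\ad$, this isomorphism intertwines $\ad X$. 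Consequently,
\[
0 \;=\; \tr_\hf(\ad X) \;=\; \tr_{\hf\cap\lf}(\ad X)+\tr_{\oline\uf}(\ad X)\;=\;\tr_{\hf\cap\lf}(\ad X)-2\rho_\uf(X),
\]
the last equality since the weights of $\ad\af$ on $\oline\uf$ are the negatives of those on $\uf$.

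It then remains to prove $\tr_{\hf\cap\lf}(\ad X)=0$. Here I would invoke the local structure theorem: $\lf_{\mathrm n}\subset L\cap H$, and $\lf_{\mathrm n}$ is an ideal of $\hf\cap\lf$. On $\lf_{\mathrm n}$, which is semisimple, $\ad X$ has trace zero. The quotient $(\hf\cap\lf)/\lf_{\mathrm n}$ embeds as a subalgebra of $\df=\lf/\lf_{\mathrm n}$, which is compact by construction; a subalgebra of a compact Lie algebra is compact, so $\ad(\bar X)$ acts there by a skew-symmetric operator with respect to an invariant inner product and hence has trace zero. Summing the two contributions yields the desired vanishing, whence $\rho_\uf(X)=0$.

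The main obstacle, modest in size but conceptually central, is the $\ad X$-equivariance of the isomorphism $\hf/(\hf\cap\lf)\cong\oline\uf$; everything else is a bookkeeping assembly of unimodularity, the graph description from \eqref{Tgraph}, and the standard fact that compact (or semisimple) Lie algebras are unimodular.
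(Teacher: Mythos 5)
Your proof is correct, and it is essentially the mirror image of the paper's (very terse) argument rather than a literally identical one. The paper computes the trace of $\ad X$, $X\in\af_H$, on the isotropy module $\gf/\hf$, which by the local structure theorem is isomorphic as an $\af_H$-module to $\uf\oplus\lf/(\lf\cap\hf)$, and concludes from unimodularity of $Z$ (vanishing of the trace on $\gf/\hf$) together with unimodularity of $\lf/(\lf\cap\hf)$ that $\rho_\uf$ vanishes on $\af_H$. You instead compute the trace on $\hf$ itself, using unimodularity of $H$ --- which is equivalent to unimodularity of $Z$ because $G$ is reductive, so $\tr(\ad X|_\gf)=0$ and the two traces are negatives of each other --- and you replace the decomposition $\gf=\hf\oplus\af_Z\oplus\mf_Z\oplus\uf$ by the graph decomposition \eqref{Tgraph} of $\hf$ over $(\lf\cap\hf)\oplus\oline\uf$; your $\oline\uf$-block then contributes $-2\rho_\uf(X)$ where the paper's $\uf$-block contributes $+2\rho_\uf(X)$. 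The steps you flag all check out: $T(Y)\in\pf\subset\lf\oplus\uf$, so the projection onto $\oline\uf$ indeed induces an $\ad X$-equivariant isomorphism $\hf/(\hf\cap\lf)\to\oline\uf$, and a skew-symmetric operator restricted to an invariant subspace is still skew, hence traceless. What your route buys is an explicit proof of the Levi-part vanishing, via $\lf_{\rm n}\subset\lf\cap\hf$ (derivations of a semisimple algebra are inner, hence traceless) and compactness of $\df=\zf(\lf)+\lf_{\rm c}$ and of its subalgebra $(\lf\cap\hf)/\lf_{\rm n}$, a point the paper leaves implicit in the assertion that $\lf/(\lf\cap\hf)$ is unimodular; conversely, the paper's formulation avoids any use of the map $T$ from \eqref{Tgraph}.
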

 \begin{proof} Follows, since $\gf/\hf$ and $\lf/\af\cap\hf$ are both 
unimodular, and $\gf/\hf$ is equivalent to $\uf+\lf/\lf\cap\hf$ as an $\af_H$-module.
\end{proof}

Denote by $\mu_Z$ a Haar-measure on $Z$. 
It follows that for a continuous function 
$f$ on $Z=G/H$,  compactly supported in $P\cdot z_0$,  
\begin{equation}\label{int}
\int_Z f(z)  d\mu_Z(z) = \int_U \int_{M_Z}\int_{A_Z} f ( u m a \cdot  z_0) a^{-2\rho_\uf}  da \ dm \ du
\end{equation}
where $du$, $dm $ and $da$ are suitably normalized 
Haar measures on $U$, $M_Z$ and $A_Z$ respectively.  
The integrand makes sense for $a\in A_Z$ because of Lemma \ref{rhou}.

\par As the choice of the ball $B$  does not matter, 
we may assume that it is invariant under multiplication by $L\cap K$ from the right
(for example, we can replace $B$ by $KBK$). It then follows, see (\ref{decomp D}),
that $BaM_Z = Ba\cdot z_0$ for all $a\in A_Z$. 
Then $$(B \cap  U)(B\cap A_Z )a M_Z \subset B^2 a\cdot z_0$$
which, in view of (\ref{int})  yields a constant $C>0$ such that 
$$ C\cdot a^{-2\rho_\uf}  \leq \vol_Z (Ba\cdot z_0) \qquad (a\in A_Z)\, . $$

An upper bound is obtained on the negative chamber as follows.

\begin{prop} \label{volume} For every ball $B$ in $G$ there exist constants
$C_1, C_2>0$ such that 
\begin{equation}\label{bound}  
C_1 \cdot a^{-2\rho_\uf} \leq {\bf v}(ax\cdot z_0) \leq C_2 \cdot a^{-2\rho_\uf}\qquad 
(a\in A_Z^-, x\in F) \end{equation}
\end{prop}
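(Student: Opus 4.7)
The lower bound at $x=\mathbf{1}$ is already established in the paragraph immediately preceding the proposition. I would extend it to general $x \in F$ by the analogous computation at the base point $x\cdot z_0$: since $Px\cdot z_0$ is open in $Z$ by (\ref{F-spherical}), the local structure theorem provides a parameterization $U\times M_Z \times A_Z \to Px \cdot z_0$ and an integration formula of the same shape as (\ref{int}) with the same exponent $\rho_\uf$. This preservation of $\rho_\uf$ uses that $x \in T_ZF_0$, where $T_Z \subset \exp(i\af)$ acts trivially on $\mf + \af$ and $F_0 \subset N_G(H)$ preserves the parabolic $Q$.

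The main task is the upper bound. Fix $x\in F$ and $a \in A_Z^-$. My plan is to establish a uniform decomposition
\[
b = u\,m\,a_1\,\tilde h,\qquad \tilde h \in (ax)H(ax)^{-1},\ (u,m,a_1)\in U'\times M_Z\times A',
\]
valid for every $b \in B$, with $U'\subset U$ and $A'\subset A_Z$ fixed compact sets independent of $a$. Granting this, one computes directly $bax\cdot z_0 = u\,m\,(a_1 a)\,x \cdot z_0$, which lies in the open $P$-orbit through $x\cdot z_0$ with $(u, m, a_1 a)$ parameters; the integration formula on that orbit gives
\[
\vol_Z(Bax\cdot z_0) \;\le\; \int_{U'\times M_Z\times A'} (a_1 a)^{-2\rho_\uf}\,du\,dm\,da_1 \;\le\; C_2\,a^{-2\rho_\uf},
\]
using Lemma \ref{rhou} to make sense of $a^{-2\rho_\uf}$ on $A_Z$, the character property $(a_1a)^{-2\rho_\uf}=a_1^{-2\rho_\uf}a^{-2\rho_\uf}$, and compactness of $A'$.

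The hard part will be the uniform decomposition. Its existence rests on the Grassmannian characterization of the compression cone in Remark~\ref{dcone}: for $a\to\infty$ in $\af_Z^{--}$, $\Ad(a)\hf \to \hf_{\rm lim}$, and combined with the direct-sum decomposition $\gf=\hf_{\rm lim} \oplus(\uf+\mf_Z+\af_Z)$ recalled in Section~2.2 (preserved under $\Ad$ of the relevant elements of $T_ZF_0$) this supplies a continuously varying linear transversal to $\Ad(ax)\hf$ inside $\uf+\mf_Z+\af_Z$. Compactness of the closure of $\{\Ad(ax)\hf:a\in A_Z^-\}$ in $\Gr_d(\gf)$, together with a face-by-face analysis of $\af_Z^-$ to handle the different limits attained on the boundary faces, then yields uniform bounds on the projection and hence on the group-level decomposition, completing the proof.
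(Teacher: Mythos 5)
Your overall reduction (lower bound as in the text, upper bound via a parameterization of the relevant piece of $Z$ on which \eqref{int} applies) is reasonable, but the step you yourself identify as the hard part is where the argument breaks. First, the uniform decomposition $b=uma_1\tilde h$ with $\tilde h\in (ax)H(ax)^{-1}$ and $(u,a_1)$ in fixed compacta, asserted ``for every $b\in B$'', forces $Bax\cdot z_0\subset Px\cdot z_0$; this is false for an arbitrary ball $B$ (the proposition is stated for every ball): e.g.\ for $B\supset \Omega$ the polar decomposition \eqref{POLAR} gives $BA_Z^-F\cdot z_0=Z$, which certainly meets the lower-dimensional $P$-orbits. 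You would first have to reduce to a small ball using the $B$-independence of the equivalence class of $\mathbf v$ (Lemma \ref{v is a weight}); you never invoke this. Second, even for a small ball the claim is a genuinely substantial uniformity statement and your justification is only a sketch: the compactness argument in $\Gr_d(\gf)$ needs to know that \emph{every} limit of $\Ad(a)\hf$, $a\in A_Z^-$ --- i.e.\ the degenerations along all faces of the compression cone, not just the interior limit $\hf_{\rm lim}$ recorded in Section 2.2 --- is still a subalgebra $\sf$ with $\pf+\sf=\gf$, and one then needs a quantitative, face-uniform open-orbit (implicit function) argument to get the bounded $U$- and $A_Z$-coordinates. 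None of this is available in the paper (which only treats the single limit $\hf_{\rm lim}$), and you do not prove it; likewise the bound $a_1\in A'$ does not follow from Proposition \ref{uniform A-ball}, since the $A_Z$-coordinate of a point of the open orbit need not lie in $A_Z^-$.

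The paper's proof is organized precisely to avoid this issue: it never claims $Ba x\cdot z_0$ stays in the open orbit. Instead, Lemma \ref{volume lemma} dominates $\vol_Z(Ba\cdot z_0)$ by $\vol_G(Ba\exp(\Xi))$ for a compact $\Xi\subset\hf$, and after the reduction to sets of the form $KB_Aa\exp(\Xi)$ (via \eqref{POLAR}, \eqref{POLAR1} and Corollary \ref{uniform A-ball cor}) one bounds the Jacobian of the single map $(k,b,X)\mapsto kb\exp(\Ad(a)X)$ on $K\times B_A\times\Xi$. The key point is the graph description \eqref{Tgraph} of $\hf$ over $\hf_{\rm lim}=\oline\uf+\hf\cap\lf$, which gives $\Ad(a)\bigl({\bf Y}+T({\bf Y})\bigr)=a^{-2\rho_\uf}({\bf Y}+R_Y(a))$ with $R_Y(a)$ bounded on the \emph{whole closed} cone $A_Z^-$ (walls included) simply because all weights occurring are $\le 0$ there --- no face-by-face analysis and no degeneration of subalgebras beyond $\hf_{\rm lim}$ is needed. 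If you want to pursue your route, you must either prove the uniform open-orbit containment with bounded coordinates (essentially the content of results in \cite{KKS2}) or switch to an argument of the paper's type; as it stands the central claim of your proof is unestablished and, without the reduction to small $B$, not even true.
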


Before giving the proof we note the following result.

\begin{lemma} \label{volume lemma}
Let $U\subset G$ and $D\subset H$  be open sets.
 Then $$\vol_Z(U\cdot z_0)\vol_H(D)\le \vol_G(UD).$$
\end{lemma}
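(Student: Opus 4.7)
The plan is to apply the standard disintegration of Haar measure along the fibration $H \to G \to Z = G/H$. Under the section's standing assumption that $Z$ is unimodular, together with the fact that $G$ is reductive and hence unimodular, the group $H$ is forced to be unimodular as well (since $\Delta_G|_H = \Delta_H$ must hold for an invariant measure on $Z$ to exist). Consequently the $G$-invariant measure $\mu_Z$ can be normalized compatibly with Haar measures $dg$ on $G$ and $dh$ on $H$ so that the quotient integral formula
$$\int_G f(g)\,dg = \int_Z \int_H f(gh)\,dh\,d\mu_Z(gH)$$
holds for every non-negative measurable function $f$ on $G$.

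I would then apply this formula to the indicator function $f = \chi_{UD}$. The left-hand side equals $\vol_G(UD)$. For the inner integral on the right, observe that any coset in $U\cdot z_0$ admits a representative $g\in U$, and for such $g$ we have $gh\in UD$ for every $h\in D$; hence the inner integral is at least $\vol_H(D)$. As the inner integral is independent of the choice of representative (thanks to unimodularity of $H$), this lower bound is well-defined on the whole set $U\cdot z_0$. Restricting the outer integral to $U\cdot z_0$ and discarding the (non-negative) contribution from its complement yields
$$\vol_G(UD) \;\geq\; \vol_H(D)\cdot\mu_Z(U\cdot z_0) \;=\; \vol_Z(U\cdot z_0)\,\vol_H(D),$$
which is the claimed inequality.

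The only subtle point in this proof plan is the invocation of the quotient integral formula with compatible normalizations of the three measures, but this is a standard consequence of the unimodularity hypotheses already in force in this section. The rest reduces to the tautology that $(g,h)\in U\times D$ implies $gh\in UD$, so I do not expect any real obstacle beyond bookkeeping the measure normalization.
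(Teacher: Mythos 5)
Your argument is correct. The hypotheses in force (this section assumes $Z$ unimodular, and $G$ is reductive, hence unimodular, so $H$ is unimodular too) do give the quotient integral formula $\int_G f(g)\,dg=\int_Z\int_H f(gh)\,dh\,d\mu_Z(gH)$ with compatibly normalized measures, valid for non-negative measurable $f$; applying it to $f=\chi_{UD}$ and bounding the fiber integral from below by $\vol_H(D)$ on cosets meeting $U$ (the bound is representative-independent by invariance of $dh$) yields exactly the claimed inequality. The paper takes a more hands-on route: instead of invoking the Weil formula globally, it exhausts $U\cdot z_0$ by countably many disjoint sets $g_i\exp(V_i)\cdot z_0$ coming from a slice for $\gf\to G/H$, observes that $UD$ contains the disjoint union of the tubes $g_i\exp(V_i)D$, and uses that on each such tube the local product structure gives the exact identity $\vol_Z(g_i\exp(V_i)\cdot z_0)\,\vol_H(D)=\vol_G(g_i\exp(V_i)D)$; summing gives the inequality. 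In effect the paper re-proves the needed piece of the quotient integration formula locally, which keeps the argument self-contained, while your version is shorter and slightly more general (it needs only measurability of $U$ and $D$), at the price of citing the quotient formula for non-negative measurable functions and the measurability of the fiber integral. Both arguments require the compatible normalization of $dg$, $dh$, $d\mu_Z$, which you address explicitly and which in the paper's application is anyway absorbed into a constant.
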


\begin{proof} We exhaust $U\cdot z_0$ by a countable union of disjoint 
sets of the form  $g_i\exp(V_i)\cdot z_0$ 
where each $V_i\subset \gf$ belongs to a fixed slice for the exponential map $\gf\to G/H$,
in a neighborhood of the origin, and where $g_i\exp(V_i)\subset U$. 
Then $UD$ contains the disjoint union of the sets
$g_i\exp(V_i)D$, and for each of these we find
$$\vol_Z(g_i\exp(V_i)\cdot z_0)\vol_H(D)= \vol_G(g_i\exp(V_i)D).\qedhere$$
\end{proof}

\begin{proof} [Proof of Proposition \ref{volume}]
We first assume that $x=\1$. The lower bound is already established 
so that we can focus on the upper bound. 

Let $\Xi\subset \hf$ be a compact symmetric neighborhood of $0\in \hf$ such that 
$\Xi \to H, X \mapsto \exp(X)$ is diffeomorphic onto its image. 
It follows from Lemma \ref{volume lemma} that there is a constant $C_0>0$ such that 
$$
\vol_Z (B a \cdot z_0)\leq C_0 \vol_G (B a \exp(\Xi))
$$

Let $B_A$ be a ball in $A_Z$. In view of (\ref{POLAR}), (\ref{POLAR1}),
and Corollary \ref{uniform A-ball cor},
it is then sufficient 
to show that 
$\vol_G (K B_A a \exp(\Xi))\leq C 
a^{-2\rho_\uf} $ for all $a \in A_Z^-$. 
Now 
$$
\vol_G (KB_A a \exp(\Xi))=\vol_G \Phi_a(K\times B_A\times \Xi)\, 
$$
where
$$ \Phi_a: K \times B_A \times \Xi \to G, \qquad  (k,b,X)\mapsto kb\exp(\Ad(a)X)\, .$$
\par 
We wish to find a uniform bound for the differential of 
$ \Phi_a$ at all $(k,b,X)\in K\times B_A\times\Xi$ and for 
all $a\in A_Z^-$.
For that we recall the isomorphism (\ref{Tgraph}).
Let $X=W+Y+T(Y)\in\hf$ where $W\in \lf\cap\hf$, $Y\in\oline\uf$, then
\begin{equation}\label{Ad(a)X}
\Ad(a)X=W+\Ad(a)(Y+T(Y)).
\end{equation}
\par Observe further that $\kf +\af +\hf_{\rm lim}=\gf$. Let 
$W_1, \ldots, W_k$ be a basis of $\lf \cap \hf$, $Y_1, \ldots, Y_m$ 
be a basis of $\oline \uf $, $V_1, \ldots, V_r$ a basis of 
$\af_Z$. Finally let $U_1, \ldots, U_s$ be independent elements 
from $\kf$, such that 
$$W_1, \ldots, W_k, Y_1, \ldots, Y_m, V_1, \ldots, V_r , U_1, \ldots, U_s$$
is a basis of $\gf$. 
To simplify notation we write ${\bf U}=U_1\wedge
\ldots \wedge U_s$ etc. 
For ${\bf Y}\in \bigwedge^k\oline \uf$ we use the notation 
${\bf Y}+ T({\bf Y})$ for 
$$ (I+T)(\bf Y)=(Y_1 + T(Y_1) )\wedge \ldots \wedge (Y_m +T(Y_m))
\in\bigwedge^k\hf\, . $$
Then 
\begin{equation}\label{limit}
\Ad(a) ( {\bf Y} +  T({\bf Y}) )=a^{-2\rho_\uf} ({\bf Y} +R_Y(a))
\end{equation}
with $R_Y(a)$ bounded on $A_Z^-$. 

\par We obtain a volume form $\omega$ on $\gf$ by 
$$\omega= {\bf W}\wedge {\bf Y}\wedge{\bf V}\wedge{\bf U}\, .$$
Note that by (\ref{Ad(a)X}) the determinant of $ d \Phi_a(\1,\1, 0))$ 
is given by 
$$ (\det d\Phi_a(\1, \1, 0))\cdot  \omega=   {\bf W}\wedge 
\left[\Ad (a) ({\bf Y}+ T({\bf Y}))\right]\wedge{\bf V}\wedge{\bf U}\, .$$
Hence by (\ref{limit}) the Jacobian of $\Phi_a$ at $(\1,  \1, 0)$  is bounded  
by $ C_2a^{-2\rho_\uf}$
with $C_2>0$ a constant which is independent of $a\in A_Z^-$.
As the formulas show, the bound is not changed by small distortions: 
for $\det d\Phi_a(k, b, \xi)$ with $k\in K$, $b\in B_A$ and $\xi\in \Xi$ the 
same bound holds true. This proves the proposition for $x=\1$. 

Recall that elements $x\in F$ decompose as $x=x_1 x_2 $ with $x_2\in N_G(H)$
and  $x_1\in \exp(i\af_Z) H_\C \cap G$. A quick inspection of the 
proof above yields  the same bounds with arbitrary $x\in F$. 
\end{proof}

\section{Harish-Chandra Schwartz space on $Z$}

Let $\Gamma_Z <A_Z$ be a lattice, that is $\log \Gamma_Z\subset \af_Z $ is a lattice in the vector 
space $\af_Z$. Set $\Gamma_Z^-:=\Gamma_Z \cap A_Z^-$. 
After enlarging $B$ if necessary we can assume that 
\begin{equation*} 
Z = B \Gamma_Z^- F \cdot z_0\, .\end{equation*}
With this we have the domination
\begin{equation}\label{integral as sum}
\int_Z f(z)\,dz\, \le \sum_{\gamma\in \Gamma_Z^-, x\in F}
\int_{B\gamma x\cdot z_0}f(z)\,dz
\end{equation}
for every non-negative measurable function on $Z$.
Note also that 
\begin{equation} \label{summation} 
\sum_{\gamma\in \Gamma_Z^- } ( 1+ \|\log \gamma\|)^{-s}<\infty
\end{equation}
for $s > \dim \af_Z$.

For $u\in \U(\gf)$ and a smooth function $f$ on $Z$ we write $L_u f$ for the left derivative of $f$ with respect 
to $u$.  With consider two families of semi-norms on $C_c^\infty (Z)$, 
$$ p_{n,u} (f) :=   \| ( 1+ {\bf w} )^n L_uf \|_2 \qquad (u \in \U(\gf), n\in \N) $$ 
and 
$$ q_{n,u} (f) := \sup_{z\in Z}  (1 + {\bf w}(z))^n \sqrt{{\bf v}(z)} |L_uf(z) |\qquad (u\in \U(\gf), n\in \N) \, .$$

\begin{proposition} \label{HCS space}
The two families $(p_{n,u})$ and $(q_{n,u})$ define the same 
locally convex topology on $C_c^\infty(Z)$.
\end{proposition}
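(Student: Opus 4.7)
The plan is to prove that each family of seminorms dominates the other. For the direction $p_{n,u}\lesssim q_{n+s,u}$, I would simply extract a pointwise $q$-bound, which gives
\begin{equation*}
p_{n,u}(f)^2=\int_Z (1+\mathbf{w})^{2n}|L_u f|^2\,dz\le q_{n+s,u}(f)^2\cdot I_s,\qquad I_s:=\int_Z\frac{dz}{(1+\mathbf{w}(z))^{2s}\mathbf{v}(z)}.
\end{equation*}
To show that $I_s<\infty$ for $s$ large, I would majorize via the lattice dissection (\ref{integral as sum}) by a sum over $\gamma\in\Gamma_Z^-$ and $x\in F$ of integrals over $B\gamma x\cdot z_0$. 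On each such piece Proposition~\ref{w is a weight}(2) gives $\mathbf{w}(z)\ge\|\log\gamma\|$, while the weight property (Lemma~\ref{v is a weight}) together with Proposition~\ref{volume} gives $\mathbf{v}(z)\asymp\mathbf{v}(\gamma x\cdot z_0)\asymp\vol(B\gamma x\cdot z_0)$. Hence each piece contributes $O((1+\|\log\gamma\|)^{-2s})$, and the estimate (\ref{summation}) shows that $I_s<\infty$ whenever $2s>\dim\af_Z$.

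For the reverse direction I would invoke a uniform local Sobolev estimate on $Z$: there exist $k\in\N$, a ball $B_0\subset G$, and $C>0$ such that
\begin{equation*}
\mathbf{v}(z_0)\,|F(z_0)|^2\le C\sum_{|u|\le k}\int_{B_0 z_0}|L_u F(z)|^2\,dz\qquad(z_0\in Z,\ F\in C^\infty(Z)),
\end{equation*}
the sum being over elements of a fixed basis of $\U(\gf)$ of order at most $k$. Applied with $F=L_v f$ and combined with the weight property of $\mathbf{w}$ (so that $(1+\mathbf{w}(z_0))^{2n}\lesssim(1+\mathbf{w}(z))^{2n}$ for $z\in B_0 z_0$), this yields
\begin{equation*}
(1+\mathbf{w}(z_0))^{2n}\mathbf{v}(z_0)\,|L_v f(z_0)|^2\lesssim\sum_{|u|\le k}\int_Z(1+\mathbf{w}(z))^{2n}|L_{uv}f(z)|^2\,dz=\sum_{|u|\le k}p_{n,uv}(f)^2,
\end{equation*}
and the supremum over $z_0$ then gives $q_{n,v}(f)\lesssim\max_{|u|\le k}p_{n,uv}(f)$.

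The main technical obstacle is the uniformity of the Sobolev estimate in $z_0\in Z$: at one fixed base point it reduces to classical Sobolev in a Euclidean chart provided by a slice transverse to $\hf$, but transport to an arbitrary $z_0$ requires $G$-invariance of $dz$ together with compatibility of the slice and the left-invariant operators $L_u$ under the $G$-translation. Such a volume-normalized Sobolev lemma on homogeneous spaces is precisely the kind of statement established in \cite{B}, so in practice I would cite it directly rather than reprove it.
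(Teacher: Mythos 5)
Your proposal is correct and follows essentially the same route as the paper: the direction $p_{n,u}\lesssim q_{m,u}$ is obtained exactly as in the text via the lattice dissection (\ref{integral as sum}), the boundedness of $\int_{B\gamma x\cdot z_0}\mathbf{v}(z)^{-1}\,dz$ coming from the weight property of $\mathbf{v}$, the lower bound $\mathbf{w}\ge\|\log\gamma\|$ and the summability (\ref{summation}); the reverse direction is, as in the paper, the invariant (volume-normalized) Sobolev lemma of Bernstein \cite{B}, which you correctly identify as the statement to cite rather than reprove.
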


\begin{proof} We apply (\ref{integral as sum}) to the definition of $p_{n,u}(f)$. 
It follows from  Lemma \ref{v is a weight} that
$\int_{B y\cdot z_0} \mathbf v(z)^{-1}\, dz$
is a bounded function of $y\in Z$. Using
(\ref{summation}) we then obtain 
$p_{n,u}(f)\leq C q_{m,u}(f)$ when $s=2m-2n>\dim\af_Z$.
\par The invariant Sobolev lemma (see \cite{B}, ``key lemma'' in section 3.4), 
provides a domination in the opposite direction. \end{proof}

The completion of $C_c^\infty(Z)$ with respect to either family 
is denoted ${\mathcal C}(Z)$ and called
the {\it Harish-Chandra Schwartz space} of $Z$. 
The space $\cC(Z)$ was introduced by Harish-Chandra 
for $Z=G=G\times G/G$ (see \cite{W}, Sect. 7.1, for 
a simplified exposition), and it was extended to symmetric spaces by van den Ban in \cite{vdB}, Sect. 17. 

By the local Sobolev lemma we see that  
$\cC(Z)\subset C^\infty(Z)$.

\begin{proposition} \label{prop nuclear} The inclusion 
$$ {\mathcal C}(Z) \hookrightarrow L^2(Z)$$
is fine, i.e. there exists a continuous Hermitian norm $p$ on ${\mathcal C}(Z)$ such that the completion ${\mathcal C}(Z)_p$ of 
$({\mathcal C}(Z), p)$ gives rise to a Hilbert-Schmidt embedding ${\mathcal C}(Z)_p \to L^2(Z)$. 
Moreover ${\mathcal C}(Z)$ is nuclear. 
\end{proposition}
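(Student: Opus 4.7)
The plan is to exhibit an explicit weighted Hermitian Sobolev norm $p$, identify $\mathcal{C}(Z)_p$ as a reproducing kernel Hilbert space via the invariant Sobolev lemma, and compute the Hilbert--Schmidt trace of the inclusion into $L^2(Z)$ as the integral of the diagonal of the reproducing kernel. The volume estimate of Proposition~\ref{volume} will then reduce that trace to a convergent sum of the type (\ref{summation}).

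Concretely, I would fix a basis $(u_j)$ of $\U_m(\gf)$ for some large integer $m$ and set
\[
p(f)^2 \,:=\, \sum_j \int_Z (1+\mathbf{w}(z))^{2N}|L_{u_j}f(z)|^2\,dz \,=\, \sum_j p_{N,u_j}(f)^2,
\]
with $N$ also to be chosen large. This is a continuous Hermitian norm on $\mathcal{C}(Z)$ that majorises $\|\cdot\|_{L^2}$, so the inclusion $T:\mathcal{C}(Z)_p\to L^2(Z)$ is well defined and continuous. The first analytic step is the invariant Sobolev lemma of \cite{B}, Section~3.4 (already used in the proof of Proposition~\ref{HCS space}): for $m$ larger than a dimensional constant one has
\[
|f(z)|^2 \le C\,\mathbf{v}(z)^{-1}\sum_j\int_{Bz}|L_{u_j}f(w)|^2\,dw, \qquad z\in Z.
\]
Since $\mathbf{w}$ is a weight (Proposition~\ref{w is a weight}), $(1+\mathbf{w}(w))\asymp (1+\mathbf{w}(z))$ uniformly for $w\in Bz$, so this upgrades to the pointwise bound
\[
\mathbf{v}(z)(1+\mathbf{w}(z))^{2N}|f(z)|^2 \le C'\,p(f)^2, \qquad z\in Z.
\]
Hence $\mathcal{C}(Z)_p$ has continuous point evaluations, is a reproducing kernel Hilbert space, and its kernel $K$ satisfies the diagonal bound $K(z,z)\le C'\mathbf{v}(z)^{-1}(1+\mathbf{w}(z))^{-2N}$.

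The next step is the trace identity $\|T\|_{\mathrm{HS}}^2 = \int_Z K(z,z)\,dz$ and the estimation of this integral. Applying the domination (\ref{integral as sum}) to the integrand $\mathbf{v}(z)^{-1}(1+\mathbf{w}(z))^{-2N}$, combined with the lower bound $\mathbf{v}(z)\gtrsim \gamma^{-2\rho_\uf}$ and the estimate $\mathbf{w}(z)\ge \|\log\gamma\|$ on $B\gamma x\cdot z_0$ furnished by Propositions~\ref{volume} and~\ref{w is a weight}, and finally using the matching upper bound $\vol_Z(B\gamma x\cdot z_0)\lesssim \gamma^{-2\rho_\uf}$ from Proposition~\ref{volume}, the $\pm 2\rho_\uf$ factors cancel and one obtains
\[
\int_Z K(z,z)\,dz \,\le\, C''\sum_{\gamma\in\Gamma_Z^-,\,x\in F}(1+\|\log\gamma\|)^{-2N},
\]
which converges for $2N>\dim\af_Z$ by (\ref{summation}). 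This completes the proof that $p$ is fine.

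For nuclearity, I would invoke the general framework of \cite{B}: allowing $m$ and $N$ to grow gives a cofinal family of continuous Hermitian seminorms on $\mathcal{C}(Z)$, and the same trace calculation applied to a pair of such norms with a sufficient gap in $N$ shows that the induced transition map between their completions is again Hilbert--Schmidt. A Fr\'echet space carrying a fundamental system of Hilbert seminorms with Hilbert--Schmidt transitions is nuclear, which gives the second assertion. I expect the main obstacle to be the first step, namely making the invariant Sobolev lemma work uniformly in $z$ with the weight $\mathbf{w}$; this relies essentially on the fact that $\mathbf{w}$ is a weight in Bernstein's sense and is precisely the content of the key lemma of \cite{B}, Section~3.4.
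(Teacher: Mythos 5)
Your argument is correct, but it takes a more hands-on route than the paper, which disposes of the proposition in two lines: it observes that by (\ref{summation}) the weight $z\mapsto(1+\mathbf{w}(z))^n$ is summable in the sense of \cite{B}, Sect.~3.2, once $n>\dim\af_Z$, and then quotes \cite{B}, Theorem 3.2, which delivers both the fineness of the inclusion and the nuclearity of $\mathcal{C}(Z)$ in one stroke. What you do is essentially reprove the relevant case of that theorem: you build the Hilbertian seminorms $p_{m,N}$ explicitly, use Bernstein's invariant Sobolev (``key'') lemma --- the same lemma the paper already invokes in Proposition \ref{HCS space} --- to get the pointwise bound $\mathbf{v}(z)(1+\mathbf{w}(z))^{2N}|f(z)|^2\lesssim p(f)^2$, and then compute the Hilbert--Schmidt norm of the inclusion as $\int_Z\|\mathrm{ev}_z\|^2\,dz$, which your cancellation of the factors $\gamma^{\pm2\rho_\uf}$ via Proposition \ref{volume}, Proposition \ref{w is a weight} and (\ref{integral as sum})--(\ref{summation}) reduces to the convergent series; this computation is precisely the verification of Bernstein's summability condition that the paper performs implicitly. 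Your nuclearity argument (a cofinal family of Hilbert seminorms with Hilbert--Schmidt transition maps, obtained by increasing both $m$ and $N$) is a standard and valid criterion, and the transition estimate is again Proposition \ref{HCS space} plus the same trace bound. The trade-off: the paper's proof is shorter and inherits the full strength of Bernstein's general framework, while yours is self-contained and makes the Hilbert--Schmidt bound quantitative, at the cost of some routine care you only sketch --- e.g.\ that point evaluations (and derivative evaluations) extend continuously to the completions $\mathcal{C}(Z)_p$ so that the identity $\|T\|_{\mathrm{HS}}^2=\int_Z K(z,z)\,dz$ is legitimate, and that $1+\mathbf{w}$ is uniformly comparable on $B$-translates, which follows from Proposition \ref{w is a weight}(1). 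None of these points is a genuine gap.
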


\begin{proof} It follows from (\ref{summation}) that the weight 
$$z\mapsto (1 + {\bf w}(z))^n$$
is summable in the sense of \cite{B}, Sect.~3.2, provided that $n>\dim \af_Z$. 
Hence  \cite{B}, Theorem 3.2, applies to $\cc(Z)$ and the assertions follow. 
\end{proof} 

\subsection{$Z$-tempered Harish-Chandra modules}

\par Let $V$ be a Harish-Chandra module for $(\gf, K)$ and $V^\infty$ its unique 
smooth moderate growth Fr\'echet  globalization, see \cite{BK}. 
We denote  by $V^{-\infty}$ the strong dual of $V^\infty$, and by
$(V^{-\infty})^H$ its subspace of $H$-fixed vectors which we
recall is finite dimensional 
(see \cite{KO} or \cite{KS}).

Attached to $\eta\in (V^{-\infty})^H$ and $v\in V^\infty$ is the matrix coefficient 
\begin{equation*}
m_{v,\eta}(gH):= \eta(\pi(g^{-1})v) \qquad (g\in G)
\end{equation*}
which is a smooth function on $Z$.

\begin{definition} Let $Z=G/H$ be a unimodular real spherical space, 
$V$ a Harish-Chandra module for $(\gf, K)$ and $\eta\in (V^{-\infty})^H$. 
The pair $(V, \eta)$ is called {\rm $Z$-tempered}  provided that there 
exists an $n \in \Z$ such that for all $v \in V$ one has 
\begin{equation}\label{tempered bound}
\sup_{z\in Z} |m_{v, \eta}(z)| \sqrt{{\bf v}(z)} (1 + {\bf w}(z))^n  <
\infty\, .
\end{equation}
\end{definition}

\begin{rmk} Note that by
Proposition \ref{w is a weight}(2) and Proposition \ref{volume}
the bound (\ref{tempered bound}) can equivalently be expressed as follows.

For all $v\in V$ there exists a constant $C_v>0$ such that 
\begin{equation} \label{tempinq} | m_{v, \eta} (\omega a x \cdot z_0)|\leq C_v a^{\rho_Q}  (1 +\|\log a\|)^n \end{equation}
for all $\omega\in \Omega$, $x \in F$ 
and $a\in A_Z^-$. 
\end{rmk}

We recall the abstract Plancherel theorem for $L^2(Z)$: 
\begin{equation}\label{Plancherel}  
L^2(Z) \simeq \int_{\hat G} \,{\mathcal M}_\pi \otimes {\mathcal H}_\pi   \ d\mu(\pi)\, . \end{equation} 
Here ${\mathcal M}_\pi\subset (V_\pi^{-\infty})^H$ is a subspace, the multiplicity space. 
The measure class of $\mu$ is unique. Specific Plancherel measures
result from choices of the inner product on the finite dimensional 
multiplicity spaces ${\mathcal M}_\pi$.  

\par The isomorphism (\ref{Plancherel}) is given by the abstract 
inverse-Fourier-trans\-form 
which assigns to a smooth section $\hat G\ni \pi \mapsto \eta_\pi \otimes v_\pi \in 
{\mathcal M}_\pi \otimes {\mathcal H}_\pi^\infty$ the (generalized) function
on $Z$  
$$ z\mapsto \int_{\hat G} m_{v_\pi , \eta_\pi}(z) \ d\mu(\pi)\qquad (z\in Z)\, .$$

\par Let us denote by $V_\pi$ the Harish-Chandra module of the unitary 
representation $(\pi, {\mathcal H}_\pi)$.  

Proposition \ref{HCS space} combined with \cite{B} (see \cite{CD} for a nice summary of the results in \cite{B}) then yields 
the following characterization of the spectrum of $L^2(Z)$. 

\begin{prop} Let $Z$ be a unimodular real spherical space with Plancherel 
measure $\mu$. For $\mu$-almost every $\pi\in \hat G$ and each 
$\eta_\pi \in 
{\mathcal M}_\pi$ the pair $(V_\pi, \eta_\pi)$ is $Z$-tempered.
\end{prop}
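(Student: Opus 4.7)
The strategy is to translate the $Z$-temperedness condition into membership of matrix coefficients in $\cC(Z)$, and then obtain such membership $\mu$-almost everywhere via Bernstein's disintegration theorem applied to the fine/nuclear embedding $\cC(Z) \hookrightarrow L^2(Z)$ from Proposition \ref{prop nuclear}. The first observation is that the sup-norm seminorm characterization of $\cC(Z)$ in Proposition \ref{HCS space} (with the family $q_{n,u}$) directly matches the defining bound (\ref{tempered bound}) in the case $u=1$: a matrix coefficient $m_{v,\eta}$ satisfies $q_{n,1}(m_{v,\eta})<\infty$ if and only if it obeys the tempered estimate for that $n$. Thus it suffices to prove that, for $\mu$-a.e.\ $\pi$ and every $\eta_\pi \in \M_\pi$, the assignment $v\mapsto m_{v,\eta_\pi}$ maps $V_\pi$ continuously into $\cC(Z)$.

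The plan is then to invoke the general result of Bernstein (\cite{B}, and summarized in \cite{CD}): whenever one has a $G$-equivariant fine embedding of a nuclear Fr\'echet $G$-module $E$ into $L^2(Z)$, the abstract Plancherel decomposition (\ref{Plancherel}) can be realized concretely so that there is a $\mu$-measurable field of continuous equivariant maps $\alpha_\pi: \M_\pi \otimes \Hc_\pi^\infty \to E^\infty$ whose integral yields the inverse Fourier transform, and $\alpha_\pi(\eta_\pi\otimes v)$ coincides with the matrix coefficient $m_{v,\eta_\pi}$. Applying this to $E=\cC(Z)$, whose smooth vectors are again $\cC(Z)$ by construction, we conclude that for $\mu$-a.e.\ $\pi$, all $\eta_\pi \in \M_\pi$ and all smooth vectors $v\in \Hc_\pi^\infty$ (in particular all Harish-Chandra module vectors $v\in V_\pi$), the matrix coefficient $m_{v,\eta_\pi}$ lies in $\cC(Z)$.

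Given this, uniformity of the integer $n$ across $v\in V_\pi$ follows from continuity. The map $v\mapsto m_{v,\eta_\pi}$ is continuous from $\Hc_\pi^\infty$ (with the usual moderate growth Fr\'echet topology) into $\cC(Z)$; hence for each continuous seminorm $q_{n,1}$ on $\cC(Z)$ there is a continuous seminorm on $\Hc_\pi^\infty$ dominating it, and one fixed $n=n(\pi,\eta_\pi)$ suffices to bound $m_{v,\eta_\pi}$ uniformly on bounded subsets of $\Hc_\pi^\infty$, and in particular pointwise for all $v\in V_\pi$. The resulting bound $q_{n,1}(m_{v,\eta_\pi})\le C_v<\infty$ is precisely (\ref{tempered bound}).

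The main obstacle, and the point requiring care, is the invocation of Bernstein's disintegration: one must verify that the nuclearity and fine embedding established in Proposition \ref{prop nuclear} match exactly the hypotheses of \cite{B}, and that the resulting measurable field $\alpha_\pi$ is indeed represented pointwise by honest matrix coefficients. This is where the summability condition (\ref{summation}) and the fact that $\cC(Z)^\infty=\cC(Z)$ (so that smooth vectors in $E$ and in $E^\infty$ agree) are used; everything else in the argument is a straightforward translation between the analytic estimates defining $\cC(Z)$ and the algebraic temperedness condition.
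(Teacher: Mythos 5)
The central step of your argument---that Bernstein's disintegration applied to the fine embedding $\cC(Z)\hookrightarrow L^2(Z)$ yields $m_{v,\eta_\pi}\in\cC(Z)$ for $\mu$-almost every $\pi$---is false, and it is not what \cite{B} provides. Plancherel-generic matrix coefficients are in general not in $\cC(Z)$; they are typically not even in $L^2(Z)$. Already in the group case $Z=G\times G/\diag(G)$ the coefficients of tempered principal series satisfy only the weak inequality $|m|\le C\,\Xi\,(1+\sigma)^N$ (Harish-Chandra's spherical function $\Xi$ and length function $\sigma$) and are neither Schwartz nor square-integrable; membership in $\cC(Z)\subset L^2(Z)$ would force a.e.\ $\pi$ to occur discretely in $L^2(Z)$. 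This is also why the definition of $Z$-temperedness allows $n\in\Z$: the bound (\ref{tempered bound}) permits polynomial \emph{growth} relative to ${\bf v}^{-1/2}$, so your identification of temperedness with finiteness of some seminorm $q_{n,1}$ with $n\in\N$ is the same overshoot.

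What the general results of \cite{B} (via the nuclearity and fineness established in Proposition \ref{prop nuclear}, which rests on the summability (\ref{summation})) actually give is the dual statement: for $\mu$-a.e.\ $\pi$ the Fourier coefficient maps $\cC(Z)\to{\mathcal M}_\pi\otimes\Hc_\pi$ are continuous, equivalently the generalized matrix coefficients $m_{v,\eta_\pi}$ define elements of the dual $\cC(Z)'$, i.e.\ they (and their $\U(\gf)$-derivatives, using smoothness) lie in weighted spaces of the form $L^2\bigl(Z,(1+{\bf w})^{-n}\,dz\bigr)$ for some $n$. The missing step---and the reason the paper invokes Proposition \ref{HCS space}---is the comparison of the two seminorm families there, specifically the invariant Sobolev (``key'') lemma of \cite{B}, which converts such weighted $L^2$-bounds into the pointwise bound $|m_{v,\eta_\pi}(z)|\le C_v\,{\bf v}(z)^{-1/2}(1+{\bf w}(z))^{N}$, which is exactly (\ref{tempered bound}) with $n=-N$. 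So the overall architecture you propose (fine nuclear embedding of $\cC(Z)$ plus Bernstein plus the seminorm comparison) is the intended one, but it must be run into the dual space $\cC(Z)'$, not into $\cC(Z)$ itself; as written, the middle claim is wrong and the conclusion drawn from it does not follow.
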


\begin{rmk}
The multiplicity space ${\mathcal M}_\pi$ can be a proper subspace of
$(V_\pi^{-\infty})^H$. This happens for example for the Lorentzian
symmetric space $G/H=\SO_0(n,1)/\SO_0(n-1,1)$ when $n\ge 4$, in which
case it can be shown with methods from \cite{S}
that there exists an irreducible Harish-Chandra 
module $V$ which embeds into $L^2(G/H)$ with multiplicity one, but 
with $\dim(V_\pi^{-\infty})^H=2$.
\end{rmk}

\section{Bounds for generalized  matrix coefficients}\label{section bounds}

In this section we assume that $H$ is a closed subgroup of $G$
with algebraic Lie algebra and with finitely many components.

Let $V$ be a Harish-Chandra module for $(\gf, K)$. 
We say that a norm on $q$ on $V^\infty$ is $G$-continuous, provided that the completion 
of $(V^\infty, q)$ is a Banach representation of $G$. For a $G$-continuous norm on $V^\infty$ 
we denote by $q^*$ its dual norm. 
Note that for all $\eta\in V^{-\infty}$ there 
exists a $G$-continuous norm $p$ such that $p^*(\eta)<\infty$.

\begin{rmk}\label{global bound} 
Note that for any compact subset $\Omega\subset G$ and $v\in V^\infty$ we have $\sup_{g\in \Omega} 
q(\pi(g)v)<\infty$ for all $G$-continuous norms on $V^\infty$. 
\end{rmk}

In \cite{W} 4.3.5 or \cite{KSS} equation (3.5) one associates to $V$ an exponent
$\Lambda_V\in \af^*$, the definition of which we recall
and adapt to the current set-up.
Consider the finite-dimensional $A$-module 
$V/\oline\nf V$ 
and its
spectrum of weights, say $\mu_1, \ldots, \mu_k\in\af_\C^*$.  
Let $H_1, \ldots, H_n\in\af$ be the basis elements
which are dual to the simple roots 
for $\nf$. Then
\begin{equation*}
\Lambda_V (H_i):= \max_{1\leq j\leq k} {\rm Re }\, \mu_j(H_i) \qquad 
(1\leq i \leq n).
\end{equation*}
Further we attach the integer $d_V \in \N_0$ as in 
\cite{KSS}. 

\begin{rmk}  \label{negative}
Later we shall use the following property of the exponent
$\Lambda_V$, which is a consequence of the Howe-Moore theorem (see
\cite{RS}, p. 447).
If $\gf$ is simple and $\pi$ is non-trivial
unitary, then $$\Lambda_V\in  {\rm Int} (\af^+)^*,$$
the interior of the dual cone of $\af^+$.
\end{rmk}

\begin{theorem}\label{upper bound}
Let $Z=G/H$ be as above, and
suppose for the minimal parabolic subgroup $P\subset G$
that $PH$ is open in $G$. Let $V$ be a Harish-Chandra module. Fix a $G$-continuous 
norm $p$ on $V^\infty$.  
Then there exists a $G$-continuous norm $q$ on $V^\infty$ 
such that
\begin{equation} \label{fundbound} 
|m_{v,\eta}(a\cdot z_0)| \le q(v)  
p^*(\eta) a^{\Lambda_V}
(1+\|\log a\|)^{d_V} \end{equation}
for $a \in A^-$, $v\in V^\infty$ and
$\eta\in (V^{-\infty})^H$. 
\end{theorem}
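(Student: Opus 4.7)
The plan is to derive (\ref{fundbound}) as the specialization to $g=a\in A^-$ of the generalized matrix coefficient estimate \cite{KSS}, eq.~(3.5). The two core ingredients are the classical asymptotic expansion of matrix coefficients along $A^-$ and the moderate growth property of the Casselman-Wallach smooth globalization $V^\infty$.

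First I would invoke the asymptotic expansion theorem for smooth matrix coefficients, a version of \cite{W}, Thm.~4.3.5 that admits distribution vectors, as used in \cite{KSS}. For any $v\in V^\infty$ and $\eta\in V^{-\infty}$, the function $a\mapsto \eta(\pi(a^{-1})v)$ admits an expansion
\[
\eta(\pi(a^{-1})v)\sim \sum_{\mu\in E_V}\sum_{k=0}^{d_V} a^{\mu}(\log a)^k\,c_{\mu,k}(v,\eta)
\]
valid for $a$ deep in $A^-$, where $E_V\subset\af_\C^*$ is a finite set of exponents determined by the $\af$-weights of $V/\oline\nf V$, the coefficients $c_{\mu,k}\colon V^\infty\times V^{-\infty}\to\C$ are separately continuous bilinear pairings, and by construction every term satisfies $|a^{\mu}(\log a)^k|\le C\,a^{\Lambda_V}(1+\|\log a\|)^{d_V}$ for $a\in A^-$. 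Exploiting continuity of $c_{\mu,k}$ in $\eta$ against $p^*$ and in $v$ against continuous seminorms $q_{\mu,k}$ on $V^\infty$, summation over the finite index set yields (\ref{fundbound}) on the interior of $A^-$ with $q=\sum q_{\mu,k}$.

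To extend the bound to the full chamber, I would patch with the trivial moderate-growth estimate $|\eta(\pi(a^{-1})v)|\le p^*(\eta)\,p(\pi(a^{-1})v)\le C\,p^*(\eta)\,q'(v)$ valid on any compact subset of $A$, using that $a^{\Lambda_V}(1+\|\log a\|)^{d_V}$ stays bounded below on compact sets. Casselman-Wallach then lets us dominate the resulting continuous seminorm by a $G$-continuous Hermitian norm, which we take as $q$.

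The hard part will be the uniform control of the expansion coefficients $c_{\mu,k}(v,\eta)$ in the distribution variable $\eta$: because $\eta\in V^{-\infty}$ is only a continuous functional on the Fr\'echet space $V^\infty$, the classical K-finite matrix coefficient bounds do not apply directly. The workaround, carried out in \cite{KSS}, is to use Casselman's subrepresentation theorem to embed $V$ inside a principal series, and then exploit explicit integral expressions for principal-series matrix coefficients to establish the continuity of $c_{\mu,k}$ in $\eta$ relative to $p^*$. Note that the hypothesis $PH$ open plays no role in deriving (\ref{fundbound}) itself; it is only used to ensure that $a\cdot z_0$ lies in the open $P$-orbit through $z_0$, so that the estimate controls $m_{v,\eta}$ on the generic part of $Z$.
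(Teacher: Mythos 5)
You lean on an off-the-shelf asymptotic expansion of $a\mapsto\eta(\pi(a^{-1})v)$ valid for \emph{arbitrary} $\eta\in V^{-\infty}$, with exponents dominated by $\Lambda_V$ and coefficients continuous against $p^*$, and you explicitly assert that the openness of $PH$ plays no role in (\ref{fundbound}). Both points are genuine gaps. No such expansion theorem exists for general distribution vectors: the exponent $\Lambda_V$ is extracted from $V/\oline\nf V$ and governs $K$-finite (or smooth-vector) coefficients, but a Dirac-type distribution vector in a unitary principal series already produces growth along $A^-$ that $a^{\Lambda_V}$ does not dominate. The bound of \cite{KSS}, Thm.~3.2, which the paper builds on, is not an expansion-plus-continuity argument at all; it is an ODE/recursion argument whose engine is exactly the infinitesimal transversality $\gf=\hf+\pf$ coming from the openness of $PH$ (in the paper one trades $PH$ open for $\bar PH$ open and $A^-$ for $A^+$): derivatives along $\af$ are rewritten modulo $\hf$, which is killed because $\eta$ is $H$-fixed, plus terms in $\nf_F$ controlled through the weights of $V/\nf_F V$; this is precisely where $\Lambda_V$, $d_V$ and the factor $p^*(\eta)$ enter. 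Your closing remark that the hypothesis only guarantees $a\cdot z_0$ lies in the open orbit therefore removes the very mechanism that produces the estimate.

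Second, even granting the bound of \cite{KSS}, the actual content of the theorem is the upgrade from a constant $C_v$ depending on $v\in V$ to a $G$-continuous norm $q(v)$. Your proposal obtains this by postulating that the expansion coefficients are continuous in $v$ against seminorms of $V^\infty$, which is assuming what has to be proven; and the ``workaround'' you attribute to \cite{KSS} (Casselman embedding plus explicit principal-series integral formulas for distribution vectors) is not what is done there. The paper's new ingredient is the Casselman comparison theorem: $\nf_F V^\infty$ is closed in $V^\infty$, so the open mapping theorem applied to the surjection $\nf_F\otimes V^\infty\to\nf_F V^\infty$ yields, for every $G$-continuous norm $q$, a $G$-continuous norm $\tilde q$ such that each $w\in\nf_F V^\infty$ admits a presentation $w=\sum_j X_j\cdot v_j$ with $q(v_j)\le\tilde q(w)$; feeding this quantitative lifting into the recursion of \cite{KSS} is what makes the bound uniform in $v$. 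Any correct write-up must either reproduce this step or supply an equivalent substitute, and your proposal currently contains neither.
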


\begin{rmk}\label{upperbound with x}
Note that by applying the theorem to the subgroup $H^x=xHx^{-1}$ and
the $H^x$-fixed distribution vector $\eta^x=x.\eta$, 
one obtains for each $x\in G$ such that $PxH$ is open a similar domination 
of $m_{v,\eta}(ax\cdot z_0)$ with a norm $q$ that depends on $x$. 
\end{rmk}

\begin{rmk} In (\ref{fundbound}) the exponent $\Lambda_V$ can 
be replaced by one with 
a more refined definition that relates to $Z$.
Recall from Section \ref{Section 2} the parabolic subgroup
$Q=LU\supset P$.
If in the definition of $\Lambda_V$ one replaces the weights of 
$V/\oline\nf V$ by those of the smaller space  
$V/(\lf\cap\hf+\oline{\uf}) V$, 
one obtains an `$H$-spherical' exponent $\Lambda_{H,V}\in\af^*$. With the technique of 
\cite{KS} Thm.~3.2 the bound can then be improved with
$\Lambda_{H,V}$ in place of $\Lambda_V$. As this is not currently needed 
the details are omitted.
\end{rmk}

\begin{proof} The proof is a development of the proof of \cite{KSS}, Th.~3.2. 
In order to compare with \cite{KSS}, it is 
convenient to rewrite the statement above. We shall assume
\begin{equation}\label{open}
\bar PH \text{ open}
\end{equation}
and instead of (\ref{fundbound}) prove for all $a\in A^+$ and $v, \eta$ 
as before that
\begin{equation} \label{fundbound2}
|m_{v,\eta}(a\cdot z_0)| \le q(v)  p^*(\eta) a^{\Lambda_{V}}
(1+\|\log a\|)^{d_V} .\end{equation}

In \cite{KSS} it is assumed that $PH$ is open and that $H$ is reductive.
However, the first step of the given proof consists of the observation that then 
$\bar PH$ is also open. With (\ref{open}) this step is superfluous.
As the assumption that $H$ is reductive is only used for that step, it thus follows from 
the theorem in \cite{KSS} 
that for each $v\in V$ there exists 
a constant $C>0$ such that 
\begin{equation}  \label{KSS bound} 
|m_{v,\eta}(a\cdot z_0)| \le C p^*(\eta) a^{\Lambda_V}
(1+\|\log a\|)^{d_V} \, \end{equation}
for all $a \in A^+$.

\par The contents of the extended version (\ref{fundbound2}) is that we can replace 
$C$ in (\ref{KSS bound}) by a $G$-continuous norm. 

\par  
For that we just need to add an extra ingredient to the proof in \cite{KSS}. The new ingredient is the 
Casselman comparison theorem (see Remark \ref{comparison} below). 
Let $\Pi \subset \Sigma^+$ the set of simple roots. For a subset $F\subset \Pi$ one 
associates a standard parabolic subalgebra $\pf_F:=\mf_F +\af_F +\nf_F$ with 
$\af_F=\{ X \in \af\mid (\forall \alpha\in F) \alpha(X)=0\}$ etc. 
The comparison theorem asserts in particular that $\nf_F V^\infty$ is closed in $V^\infty$. 
Let $X_1, \ldots, X_n$ be a basis of $ \nf_F$ and consider the surjective map 
of Fr\'echet spaces 
$$\mathcal{T}:\quad\nf_F \otimes V^\infty \to \nf_F V^\infty, \ \ \sum_{j=1}^n X_j \otimes v_j\mapsto 
\sum_{j=1}^n X_j\cdot v_j.$$
The open mapping theorem implies that for every neighborhood $U$
of $0$ in $\nf_F \otimes V^\infty$ there exists a neighborhood $\tilde U$
of $0$ in $\nf_F V^\infty$ such that $\tilde U\subset \mathcal{T}(U)$. Hence
for every $G$-continuous norm $q$ on $V^\infty$ 
there exists a $G$-continuous norm $\tilde q$ on $V^\infty$ 
such that for all $w\in \nf V^\infty$ there exist $v_1, \ldots, v_n\in V^\infty$ 
with 
$w=\mathcal{T}(\sum_{j=1}^n X_j\otimes v_j)$ 
and $q(v_j) \leq \tilde q (w)$. 
Having said that the proof is a simple modification of \cite{KSS}: one obtains a quantitative version of the 
key-step leading to (3.11) in the proof of Theorem 3.2 in \cite{KSS}.  
\end{proof}

In view of Remarks \ref{global bound} and \ref{upperbound with x}, the polar decomposition
(\ref{POLAR}) would allow us to obtain a global bound from the upper bound 
(\ref{fundbound}), if it were on $A_Z^-$ and 
not on the potentially smaller set  $A^- A_H/A_H$. We recall that
spherical spaces with the property
\begin{equation*} 
A_Z^- = A^-A_H/A_H\end{equation*} 
are called {\it wavefront} (see \cite{KKSS}, Sect. 6).
All symmetric spaces are wavefront. 

\begin{rmk}\label{comparison}  For a Harish-Chandra module $V$ for $(\gf, K)$  and $F\subset \Pi$ we obtain  
an induced Harish-Chandra module $V/\nf_F V=H_0(V, \nf V)$ for the pair $(\gf_F, K_F)$  where $\gf_F=\mf_F +\af_F$. Likewise 
all higher homology groups $H_p(V, \nf_F)$ are Harish-Chandra modules for $(\gf_F, K_F)$. 

The Casselman comparison theorem states that the $\nf_F$-homology groups 
$H_p (V^\infty , \nf_F)$ are separated (Hausdorff) and that the natural mappings 
$H_p(V,  \nf_F)\to H_p (V^\infty , \nf_F)$ induce 
isomorphisms $H_p(V,  \nf_F)^\infty \simeq H_p (V^\infty,   \nf_F)$ 
for all $p\geq 0$. In particular, for $p=0$, we obtain that $\nf_F V^\infty$ is 
closed in $V^\infty$. Up to present, this theorem remains unpublished, although it 
was applied  quite often (see for instance \cite{BO}, Th. 1.5).
Notice that the closedness
of $\nf_F V^\infty$ was crucial in the above proof.

\par An analytic version of the comparison theorem has been established (see \cite{Br}, Thm.~1). It implies
that all $H_p ( V^\omega, \nf_F)$ are separated and  
$$H_p(V, \nf_F)^\omega \simeq H_p ( V^\omega, \nf_F)\qquad (p\geq 0)\, .$$
Here $V^\omega$ is the space of analytic vectors of $V$. 
In fact for the purposes of this paper the analytic comparison theorem is sufficient. 
Eventually it leads to a slight reformulation of the bounds in Theorem \ref{upper bound}
in terms of analytic norms.  We now describe the details. 

\par To begin with we briefly recall the nature 
of the topological vector space $V^\omega$, see \cite{GKS}. We fix a $G$-continuous norm $p$ on $V$ 
and let $V_p$ be the Banach completion of $(V, p)$.  Let $U$ be a bounded open neighborhood 
of $0$ in $\gf_\C$ such that $\exp|_U$ is diffeomorphic onto its image in $G_\C$. For $\e>0$ 
we set $\U_\e:=\exp(U_\e)$ and let $\oline\U_\e$ be its closure.  We denote by $V_p^\e\subset V_p$ 
the subspace of those vectors $v\in V_p$ for which the orbit map $G\to V_p, \ g\mapsto g\cdot v$ extends 
to a continuous map from $G\oline\U_e$ to $E$ which is holomorphic when restricted on $\U_\e$. 
Then $V_p^\e$ becomes a Banach space with norm $p_\e(v) =\max_{g\in \U_\e}  p(g\cdot v)$. 
Note that there are natural continuous inclusions $V_p^\e \to V_p^{\e'}$ for $\e'<\e$. 
Then $V^\omega=\lim_{\e\to 0} V_p^\e$ is the inductive limit of Banach spaces $V_p^\e$. As a topological 
vector space $V^\omega$ is of type DNF (dual nuclear Fr\'echet). 

\par Our concern is now the continuous surjective map 
$$\psi:  \nf_F\otimes V^\omega \to \nf_F V^\omega\,. $$
As closed subspaces of DNF-spaces are DNF we infer from the open mapping theorem 
(which holds true for DNF-spaces, see \cite{HT90}, Appendix A.6) 
that $\psi$ is an open mapping. Let ${\mathcal K}:=\ker \psi$ 
and 
$$\phi: \nf_F V^\omega \to (\nf_F \otimes V^\omega)/ {\mathcal K}$$
the inverse map induced from $\psi$. We are interested in a quantitative description 
of the continuity of $\phi$. Fix $\e>0$. As $V_p^\e \to V^\omega$ is continuous, we get that 
$V_p^\e\cap \nf_FV^\omega$ is closed in the Banach space $V_p^\e$ and further a continuous map 
$$\phi_\e:   V_p^\e \cap \nf_F V^\omega \to (\nf_F \otimes V^\omega)/ {\mathcal K}\, .$$ 
For $\delta>0$ we set ${\mathcal K}_\delta:= (\nf_F\otimes V_p^\delta) \cap {\mathcal K}$.
Then ${\mathcal K}_\delta$ is a closed subspace of the Banach space $\nf_F\otimes V_p^\delta$
and we have continuous inclusions $u_\delta: (\nf_F\otimes V_p^\delta)/ {\mathcal K}_\delta \to 
(\nf_F \otimes V^\omega)/ {\mathcal K}$ with 
$$(\nf_F \otimes V^\omega)/ {\mathcal K}
=\bigcup_{\delta>0} \operatorname{im} \ u_\delta\, .$$ 
Hence the Grothendieck factorization theorem  (see \cite{Gro}, Ch. 4, Sect. 5, Th. 1) applies to the map $\phi_\e$ 
and we obtain an $\e'>0$ such that $\operatorname{im}\phi_\e\subset \im u_{\e'}$ and that the induced map 
between Banach spaces $V_p^\e \cap \nf_F V^\omega \to (\nf_F \otimes V_p^\delta)/{\mathcal K}_\delta$
is continuous.  
In particular, For all $\e>0$ there exists an $\e'>0$ and a constant $C_\e>0$ such that for all $v\in \nf_F V$ there 
exists a presentation $v = \sum_\alpha X_{-\alpha} u_\alpha$  with 
\begin{equation} \label{p to q 2} 
\sum_{\alpha} p_\e(u_\alpha) \leq C_\e p_{\e'} (v)\, .\end{equation}
This was the crucial topological ingedient to the proof of Theorem \ref {upper bound}. The upshot is that we arrive at 
the same bound as in (\ref{fundbound}) but with $q$ replaced by $p_\e$.  For the current application
these bounds are sufficient.  
The reason why we formulated matters in the smooth category is mainly that we consider smooth completions as 
more natural than analytic ones. 
\end{rmk}

\begin{rmk} 
It is possible to obtain a bound on all of $A_Z^-$ but not solely
with the ODE-techniques used in 
this  approach. We will return to that topic in \cite{KKS2}. 
\end{rmk}

\section{Property \propertyUI}

We assume for the moment just that $G$ is a real reductive group and
$Z=G/H$ a unimodular homogeneous space. 
We introduce an integrability condition  for matrix
coefficients on $Z$. It has some similarity with
Kazhdan's property (T).

\par We denote by $\hat G$ the unitary dual of $G$.

\begin{definition}\label{dui} We say that 
$Z=G/H$ has  {\rm property \propertyUI}
provided for all $\pi\in \hat G$
and $\eta\in (\Hc_\pi^{-\infty})^H$ the stabilizer $H_\eta$
of $\eta$ is such that $Z_\eta:=G/H_\eta$ is unimodular 
and there
exists $1\leq p<\infty$ such that
\begin{equation}\label{Lp}
m_{v,\eta}\in L^p(G/H_\eta),
\end{equation}
for all $v\in \Hc_\pi^\infty$.
\end{definition}

The following lemma
shows that it suffices to have (\ref{Lp}) for $K$-finite vectors $v$
of any given type which occurs in $\pi$.

\begin{lemma} \label{lequi} Let $(\pi, \Hc_\pi)$
be irreducible unitary, and let
$\eta\in (\Hc_\pi^{-\infty})^H$ and $1\leq p<\infty$.
The following statements are equivalent:
\begin{enumerate}
\item $m_{v, \eta}\in L^p(Z)$ for all $v\in \Hc_\pi^\infty$.
\item $m_{v, \eta}\in L^p(Z)$ for all $K$-finite vectors in $\Hc_\pi$.
\item $m_{v,\eta}\in L^p(Z)$ for some $K$-finite vector $v\neq 0$.
\end{enumerate}
\end{lemma}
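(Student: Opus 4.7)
The implications $(1)\Rightarrow(2)\Rightarrow(3)$ are immediate: every $K$-finite vector lies in $\Hc_\pi^\infty$, and $\pi$ being nonzero guarantees the existence of such vectors. The content of the lemma is $(3)\Rightarrow(1)$, which I will address by studying the subspace
\[
L := \{v\in \Hc_\pi^\infty \mid m_{v,\eta}\in L^p(Z)\}
\]
and proving $L=\Hc_\pi^\infty$.

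Two stability properties of $L$ come for free. First, $L$ is invariant under the regular representation of $G$: since $m_{\pi(g_0)v,\eta}(z) = m_{v,\eta}(g_0^{-1}\cdot z)$ and the measure on $Z$ is $G$-invariant, translation preserves membership in $L$ together with the $L^p$-norm. Second, for $f\in C_c(G)$ one verifies the convolution identity
\[
m_{\pi(f)v,\eta}(z) = \int_G f(g)\, m_{v,\eta}(g^{-1}\cdot z)\,dg,
\]
so Young's inequality yields $\|m_{\pi(f)v,\eta}\|_p \le \|f\|_1\,\|m_{v,\eta}\|_p$ and hence $\pi(f)v\in L$.

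Next I would exploit the irreducibility of $\pi$. Since $v_0$ is nonzero and $K$-finite, $V_\pi = \U(\gf)(K\cdot v_0)$ as a $(\gf,K)$-module. Combining this with the identity $u\pi(\phi)v_0 = \pi(L_u\phi)v_0$ for $u\in\U(\gf)$ and $\phi\in C_c^\infty(G)$, together with the two stabilities above, places every element of the form $u\pi(\phi)v_0$ inside $L$. A Fatou passage along an approximate identity $\phi_n\to\delta_1$ would then deduce that the limit $uv_0$ also lies in $L$, yielding $V_\pi \subset L$, which is statement $(2)$. The implication $(2)\Rightarrow(1)$ should then follow from the Casselman--Wallach theorem: the matrix coefficient map is a $(\gf,K)$-equivariant embedding $V_\pi\hookrightarrow L^p(Z)$, and since $L^p(Z)$ is a Banach $G$-module of moderate growth (acting isometrically), the uniqueness of the smooth moderate-growth globalization of $V_\pi$ forces the extension to $\Hc_\pi^\infty$ to take its values in $L^p(Z)$.

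The main obstacle I anticipate is securing the uniform $L^p$-bound needed by the Fatou step, because $\|L_u\phi_n\|_1$ blows up as $\phi_n\to\delta_1$. A cleaner route around this difficulty is to couple the Casselman asymptotic theory with the $(\gf,K)$-irreducibility of $V_\pi$ and argue directly that the subspace $V_\pi\cap L$ is a $(\gf,K)$-submodule of $V_\pi$ (and hence equal to $V_\pi$ once it contains $v_0$): every $K$-finite matrix coefficient admits a convergent asymptotic expansion whose leading exponents, and in particular the one governing $L^p$-integrability, depend only on $\pi$ and $\eta$ and not on the individual $K$-finite vector, so differentiation by $L_X$ preserves $L^p$-membership. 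This reduces the whole lemma to a purely representation-theoretic fact about the exponents attached to $\pi$.
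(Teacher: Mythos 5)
Your reductions $(1)\Rightarrow(2)\Rightarrow(3)$ and your final step $(2)\Rightarrow(1)$ via Casselman--Wallach are fine and coincide with the paper's argument. The problem is the heart of the lemma, $(3)\Rightarrow(2)$, which your proposal does not actually establish. Your first route founders exactly where you say it does: Young's inequality only gives $\|m_{\pi(L_u\phi_n)v_0,\eta}\|_p\le\|L_u\phi_n\|_1\|m_{v_0,\eta}\|_p$, and $\|L_u\phi_n\|_1$ blows up along an approximate identity, so there is no uniform bound to feed into Fatou. Your proposed repair is worse than the disease: there is no Casselman-type asymptotic expansion theory for generalized matrix coefficients $m_{v,\eta}$, $\eta\in(\Hc_\pi^{-\infty})^H$, on a general real spherical space at this level of generality --- the paper's own Theorem \ref{upper bound} only gives bounds on $A^-$ (not even on all of $A_Z^-$), and the expansion/bound on $A_Z^-$ is explicitly deferred to \cite{KKS2}. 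Moreover, even where expansions exist, the assertion that the exponent governing $L^p$-integrability ``does not depend on the individual $K$-finite vector'' is not automatic: what is canonical is the set of possible exponents, while the coefficients of the dominant terms can vanish for particular vectors, so vector-independence of integrability is precisely what needs proof, not an input.

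The paper closes this gap with a soft functional-analytic argument that avoids expansions altogether: since $m_{\pi(g)v_0,\eta}=L_g m_{v_0,\eta}$, let $E\subset L^p(Z)$ be the closed $G$-invariant subspace generated by $m_{v_0,\eta}$; this is a Banach representation, $m_{v_0,\eta}$ is a $K$-finite vector of $E$, and the Casimir $\cc$ acts on it (distributionally) by the scalar by which it acts on the irreducible module $V$. Elliptic regularity in the form of \cite{BK}, Prop.~3.5, then shows $m_{v_0,\eta}$ is a smooth vector of $E$, so its entire $(\gf,K)$-orbit --- which is $\{m_{w,\eta}: w\in V\}$, since $v_0$ generates $V$ by irreducibility --- lies in $E\subset L^p(Z)$, which is (2). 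If you prefer to salvage your convolution route instead, the classical Harish-Chandra trick works: by admissibility the $K$-isotypic component $\Hc(\delta)$ of $v_0$ is finite-dimensional, and projecting an approximate identity onto it shows $v_0$ lies in the (closed, finite-dimensional) subspace $\{\pi(\psi)v_0:\psi\in C_c^\infty(G)\}\cap\Hc(\delta)$, i.e.\ $v_0=\pi(\psi)v_0$ for some fixed $\psi\in C_c^\infty(G)$; then $u v_0=\pi(L_u\psi)v_0\in L$ for every $u\in\U(\gf)$ directly, with no limiting argument. Either repair is needed; as written, the proposal has a genuine gap at $(3)\Rightarrow(2)$.
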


\begin{proof}
Let $V$ be the Harish-Chandra module of $(\pi, \Hc_\pi)$,
i.e. the space of $K$-finite vectors. According to Harish-Chandra,
$V$ is an irreducible $(\gf, K)$-module. The map $v\mapsto
m_{v,\eta}$ is equivariant $V\to C^\infty(Z)$.

We first establish ``$(3)\Rightarrow (2)$''.
Let $v\in V$ be non-zero with $m_{v,\eta}\in L^p(Z)$,
then $v$ generates $V$, and (2) is equivalent with
the statement that $m_{v,\eta}\in L^p(Z)^\infty$.

Let $E$ be the
closed $G$-invariant subspace
of $L^p(Z)$ generated by $m_{v,\eta}$.
As the left action on $L^p(Z)$ is a Banach representation,
the same holds for $E$. The Casimir element
$\cc$ acts by a scalar on $V$, hence it
acts (in the distribution sense) on $E$
by the same scalar. It follows that all $K$-finite
vectors in $E$ are smooth for the
Laplacian $\Delta$ associated to $\cc$.
Thus any $K$-finite vector of $E$ belongs to
$E^\infty\subset L^p(Z)^\infty$ by \cite{BK}, Prop.~3.5.

Finally ``$(2)\Rightarrow (1)$'' follows from
the Casselman-Wallach globalization theorem
(see \cite{BK}),
which implies that the map  $v\mapsto m_{v,\eta}$, $V\to L^p(Z)$,
extends to
$\Hc_\pi^\infty\to L^p(Z)^\infty$ .
\end{proof}

In the definition of property \propertyUI\ we have to take into account that
the stabilizer $H_\eta$ inflates $H$. To discuss this efficiently
it is useful to have an appropriate notion of
factorization for $Z$.

\subsection{Factorization}\label{factorization}
If
there exists a closed subgroup 
$H\subset H^\star\subset G$ 
with $Z^\star=G/H^\star$ unimodular, 
then we call $Z$ {\it factorizable}, 
and call
$$Z\mapsto Z^\star, \ \ gH\mapsto gH^\star$$
a {\it factorization} of $Z$. We call the factorization {\it proper} if
$\dim H<\dim H^\star< \dim G$ and {\it co-compact} if $H^\star/H$ is compact.

\begin{ex}\label{ex7.5}
1) Irreducible symmetric spaces do not have proper factorizations.
In fact, if $(\gf,\hf)$ is a symmetric pair and
$\hf\subsetneq \hf^\star\subsetneq \gf$ a factorization,
then $\gf=\gf_1\times\gf_2$, $\hf=\hf_1\times\hf_2$
and $\hf^\star=\hf_1\times\gf_2$
for some decomposition of $\gf$.
\par\noindent 2) Let $G^n:=G\times \dots \times G$ denote the
direct product of $n$ copies of $G$. Suppose that $G$ is simple.
The homogeneous space
$Z=G^{n}/\diag(G)$ with the diagonal subgroup
$\diag(G)=\{(g,\dots,g)|g\in G\}$
is factorizable if and only if $n>2$.
For example, for $n=3$, we can take
$H^\star=\{(g_1, g_2, g_2)\mid g_1, g_2\in G\} \simeq G\times G$
and permutations thereof.
\par\noindent 3) Let $G/H=\SO(8,\C)/G_2(\C)$. This space is spherical but not wavefront.
The symmetric space $G/H^\star=
\SO(8,\C)/\SO(7,\C)$ is a factorization.
\end{ex}

Note that one has $H_\eta\supset H$ for every $\eta\in (V^{-\infty})^H$. However it is a
priori not clear that $G/H_\eta$ is unimodular if $G/H$ was unimodular. 
Here is an example:

\begin{ex}\label{remIstar} Suppose that $Z=G/N$ with $N$ the unipotent radical 
of a minimal parabolic $P=MAN$. Assume in addition that $G$ is simple 
and that $\dim A \geq 2$. We let $V$ be a generic  
irreducible $K$-spherical unitary principal series. Then $\dim (V^{-\infty})^N=
\#{\mathcal W}$ with ${\mathcal W}$ the Weyl group of the pair $(\gf, \af)$. 
As $V$ was supposed to be generic, the action of $A$ on  $(V^{-\infty})^N$
is semi-simple. Let $\eta\in (V^{-\infty})^N$ such that $a\cdot \eta =
\chi(a) \eta$ for $a\in A$ and a character $\chi: A\to \C^*$. 
If we denote by $A_\chi:=\ker \chi$, then 
$H_\eta= A_\chi N$. As $G$ is simple and $\dim A\geq 2$ it follows that 
$A_\chi$ acts non-trivially on $N$. We may assume that $\chi$ is generic 
enough (not a multiple of $\rho$) so that $A_\chi$ acts in a non-unimodular 
fashion on $N$.  Thus $Z_\eta$ is not unimodular. 
\end{ex}

In case $Z$ is a wavefront real spherical space with $H$ reductive, then the 
factorizations which are of type $G/H_\eta$ will turn out to be
of a special simple shape. 
Let 
\begin{equation}\label{simple ideals dec}
\gf=\gf_1 \times \ldots \times \gf_k
\end{equation}
be a decomposition into 
simple ideals and one dimensional ideals. 
Here the simple ideals $\gf_i$ are unique (up to the order of terms).
For a subset $I \subset \{ 1, \ldots, k\}$ we set 
$$ \hf_I := \hf + \bigoplus_{j \in I} \gf_j\, .$$
Further we set $H_I= H \prod_{j\in I} G_j$ with $G_j\triangleleft G$ the connected 
normal subgroup of $G$ with Lie algebra $\gf_j$.
If $H$ is reductive and $H^\star<G$ is a subgroup with Lie algebra $\hf^\star=\hf_I$
for some $I\subset \{ 1, \ldots, k\}$ and some decomposition (\ref{simple ideals dec}), 
then we call 
$Z^\star:=G/H^\star$ a {\it basic factorization}. 
Note that $H$ reductive implies $H^\star$ reductive in this case.

\begin{ex}
Suppose that $Z=G\times G \times G / \diag (G)$ with
$G$ simple.
Irreducible unitary representations of $G\times G\times G$
are tensor products $\pi=\pi_1\otimes \pi_2 \otimes \pi_3$. If $\pi$ is
non-trivial and $H$-spherical and if one constituent,
say $\pi_1$, is trivial,  then
$H_\eta=\{ (g_1, g_2, g_2)\mid g_1, g_2\in G\}$
is basic.
\end{ex}

In the sequel we need a weaker notion 
of basic factorization. We call a factorization $G/H^\star$ of $G/H$
{\it weakly basic} if 
it is built by a sequence of factorizations
each which is either co-compact or basic.

\subsection{Main theorem on property \propertyUI}
In this last section we resume the assumption of Section \ref{section bounds},
that is, $H$ is a closed subgroup of $G$
with algebraic Lie algebra and with finitely many components.
We call $Z=G/H$ real spherical when $PH$ is open for some minimal parabolic
subgroup. If this is the case, then we say that $G/H$ is wavefront
if its factorization with the Zariski closure of $H$ is wavefront.

\par We can now state one of the main results of the paper.

\begin{theorem}\label{BAN} Let $Z=G/H$ be a wavefront real spherical space with 
$H$ reductive.  Then $Z$ has property \propertyUI{}. Moreover, $Z_\eta=G/H_\eta$ is a weakly basic factorization of $Z$, 
for every unitarizable Harish-Chandra module $V$
and $\eta\in (V^{-\infty})^H$. 
\end{theorem}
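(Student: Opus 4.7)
The strategy is to identify $H_\eta$ as a weakly basic extension of $H$, and then apply Theorem \ref{upper bound} together with Proposition \ref{volume} and the Howe--Moore principle from Remark \ref{negative}.

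\emph{Structure of $H_\eta$.} Decompose $\gf$ into its abelian center and simple ideals as in \eqref{simple ideals dec}, and let $I$ index those $\gf_i\subset\ker d\pi$. The connected normal subgroup $G_I$ lies in $\ker\pi\subset H_\eta$, so $H^I:=HG_I$ produces a basic factorization $Z\to G/H^I$. The key structural claim is that $H_\eta/H^I$ is compact, so that $G/H_\eta$ is a co-compact factorization of $G/H^I$, and $Z_\eta$ is therefore a weakly basic factorization of $Z$; at each step unimodularity is preserved, yielding the first item of Definition \ref{dui}. Compactness of $H_\eta/H^I$ is argued by descending to $G/G_I$, where the induced representation acts non-trivially on every simple factor. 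By Howe--Moore its matrix coefficients vanish at infinity, and the real spherical geometry combined with this decay forces the stabilizer of the induced distribution vector to be compact modulo $H^I/G_I$.

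\emph{$L^p$ estimate on $Z_\eta$.} Since $Z_\eta$ inherits the wavefront real spherical structure of $Z$ and $H_\eta$ has algebraic Lie algebra, Theorem \ref{upper bound} combined with Remark \ref{upperbound with x} applies. Using the polar decomposition $Z_\eta=\Omega A_{Z_\eta}^- F_\eta\cdot z_{0,\eta}$ and the wavefront identity $A_{Z_\eta}^-=A^- A_{H_\eta}/A_{H_\eta}$, we obtain for each $x\in F_\eta$ a $G$-continuous seminorm $q_x$ on $V^\infty$ such that
\begin{equation*}
|m_{v,\eta}(\omega a x\cdot z_{0,\eta})|\le q_x(v)\,p^*(\eta)\,a^{\Lambda_V}(1+\|\log a\|)^{d_V}
\end{equation*}
uniformly for $\omega\in\Omega$ and $a\in A^-$. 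Combined with the volume asymptotics $\mathbf{v}(ax\cdot z_{0,\eta})\asymp a^{-2\rho_\uf}$ from Proposition \ref{volume} and the lattice domination \eqref{integral as sum}, the $L^p$-integral is dominated by a finite sum of series of the form
\begin{equation*}
\sum_{\gamma\in\Gamma_{Z_\eta}^-}\gamma^{p\Lambda_V-2\rho_\uf}(1+\|\log\gamma\|)^{p\,d_V}.
\end{equation*}

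\emph{Convergence.} After the reduction, $\pi$ acts non-trivially on every non-compact simple factor of $G$, so by Remark \ref{negative} the exponent $\Lambda_V$ is strictly negative on $\af^-\setminus\{0\}$. For $p$ sufficiently large, $p\Lambda_V-2\rho_\uf$ is strictly negative on $\af_{Z_\eta}^-\setminus\{0\}$, and the series converges by \eqref{summation}. Consequently $m_{v,\eta}\in L^p(Z_\eta)$ for every $v\in V^\infty$, establishing property \propertyUI{}.

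\emph{Main obstacle.} The structural analysis, in particular the compactness of $H_\eta/H^I$, is not a formal consequence of unitarity: it requires coupling the real spherical structure of $Z$ with the Howe--Moore decay of the reduced representation. Once this is in place, the remainder is an assembly of the bounds from Section \ref{section bounds} and Proposition \ref{volume}.
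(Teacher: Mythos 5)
Your overall strategy (Theorem \ref{upper bound} plus Remark \ref{negative}, Proposition \ref{volume}, and the lattice summation \eqref{summation}) is the same as the paper's, but the two steps you treat as given are precisely where the work lies. First, the compactness of $H_\eta/H^I$ --- which you yourself flag as the main obstacle --- is only asserted, not proved. Howe--Moore cannot be quoted directly here: $m_{v,\eta}$ is a generalized matrix coefficient against a distribution vector $\eta\in(V^{-\infty})^H$, not a coefficient of two Hilbert-space vectors, so ``decay at infinity'' is not automatic; Howe--Moore enters only through Remark \ref{negative}, i.e.\ through the statement $\Lambda_V\in\operatorname{Int}(\af^+)^*$. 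The paper's actual argument is concrete: $m_{v,\eta}$ is constant, equal to $\eta(v)$, on the orbit $H_\eta\cdot z_0$; if $H_\eta/H$ were non-compact one could write points of this orbit as $\omega_n a_n x\cdot z_0$ with $a_n\to\infty$ in $A_Z^-$, and the bound of Theorem \ref{upper bound} --- which covers all of $A_Z^-$ only because $Z$ is wavefront --- together with the strict negativity of $\Lambda_V$ on $\af^-\setminus\{0\}$ forces $m_{v,\eta}$ to tend to zero along this sequence, contradicting constancy for $v$ with $\eta(v)\neq 0$. You need to supply this (or an equivalent) argument; as written, the structural claim on which everything else rests is a gap.

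Second, you run the $L^p$ estimate directly on $Z_\eta$, invoking a polar decomposition $Z_\eta=\Omega A_{Z_\eta}^-F_\eta\cdot z_{0,\eta}$, the wavefront identity $A_{Z_\eta}^-=A^-A_{H_\eta}/A_{H_\eta}$, and the asymptotics of Proposition \ref{volume} for $Z_\eta$. None of this is established for $Z_\eta$: real sphericity of $G/H_\eta$ is immediate from $PH\subset PH_\eta$, but the wavefront property of $G/H_\eta$ is not, $H_\eta$ is not known a priori to be reductive, and Proposition \ref{volume} for $Z_\eta$ would involve the data ($Q$, $\uf$, $\rho_\uf$) attached to $Z_\eta$ rather than to $Z$. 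Without wavefrontness of $Z_\eta$ the bound of Theorem \ref{upper bound}, which lives on $A^-$, need not control all of $A_{Z_\eta}^-$, and your summation argument collapses. The paper deliberately avoids this: when $\pi$ is non-trivial on every simple factor it proves $f\in L^p(Z)$ (where all the structure theory is available), then uses the compactness of $H_\eta/H$ to pass to $L^p(Z_\eta)$; when $\pi$ kills a normal factor $S$ it inducts over the proper basic factorization $G/SH$, whose hypotheses (wavefront, real spherical, reductive isotropy) are preserved. Your one-step reduction modulo $G_I$ could replace that induction, but either way the estimates must be carried out on a space for which the wavefront and volume machinery is actually known, not on $Z_\eta$ directly.
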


\begin{proof} Note that any basic factorization $Z^\star$ of $Z$
will satisfy the same assumptions as requested for $Z$. 
Proceeding by induction, we may thus assume that
all proper basic factorizations $Z^\star$ have property \propertyUI.
We may also assume that $\hf$ contains no
non-trivial ideal of $\gf$ and that $\gf$ is semi-simple.

\par Let $\pi\in \hat G$ be non-trivial and let
$f=m_{v,\eta}$ with non-zero vectors $\eta\in (\Hc_\pi^{-\infty})^H$
and $v\in \Hc_\pi^\infty$.
In view of Lemma \ref{lequi}
it is sufficient for property \propertyUI{}
that $G/H_\eta$ is unimodular and that
$f\in L^p(G/H_\eta)$ for some $1\leq p<\infty$.

\par As $Z$ is wavefront we have 
$Z=\Omega A^- F \cdot z_0$. Note that we may assume that 
$\Omega \subset B$ where $B\subset G$ is a ball. Furthermore, we may assume that
$G$ is semisimple.
 
\par Assume first that $\pi$ is non-trivial on every non-trivial connected
normal subgroup of $G$.
Then by Remark \ref{negative}
the exponent $\Lambda_V$ of $\pi$ is 
contained in ${\rm Int} (\af^+)^*$, i.e. if $\|\cdot\|$ is a norm on 
$\af$, then there exists a constant $C>0$ such that 
$$ - \Lambda_V(X) \geq C\|X\| \qquad (X\in \af^-)\, .$$

\par We apply (\ref{integral as sum}) to the integral of $|f|^p$. 
For each $\gamma\in\Gamma_Z^-$ and $x\in F$ we find
from Theorem \ref{upper bound} and Remark \ref{upperbound with x}
$$|f(g\gamma x\cdot z_0)|\le q(g^{-1}\cdot v)p^*(x\cdot\eta)
\gamma^{\Lambda}(1+\|\log( \gamma)\|)^d$$
and hence by (\ref{bound}) and Remark \ref{global bound}
$$\int_{B\gamma x\cdot z_0} |f(z)|^p\,dz
\le C
\gamma^{p\Lambda_V-2\rho_{\uf}}(1+\|\log( \gamma)\|)^{pd} $$
for some constant $C>0$.
As $\Lambda_V \in  {\rm Int} (\af^+)^*$, this 
can be summed over $\Gamma_Z^-$  
for $p$ sufficiently large, and then
$f\in L^p(Z)$.

We claim that $H_\eta/H$ is compact. Otherwise we find a sequence 
$\omega_n a_n x\cdot z_0\in H_\eta/H$ with $\omega_n \in \Omega$ 
and $a_n\to \infty $ in $A_Z^-$. But this contradicts the bound 
from Theorem \ref{upper bound}. Hence $Z_\eta$ is a co-compact factorization. 
In particular, it is then unimodular and weakly basic. This establishes 
the properties requested in Definition \ref{dui} for this case.
\par On the other hand, assume 
$G$ is not simple and $\pi$ is $1$ on some non-trivial normal subgroup 
$S\triangleleft G$. This subgroup is contained in $H_\eta$ but not in $H$,
hence $Z\to G/SH$ is a proper basic factorization. 
It follows from our inductive hypothesis that $G/SH$ has property \propertyUI{}
and that $G/(SH)_\eta$ is weakly basic.
Since $(SH)_\eta=H_\eta$ we obtain the assertions in the theorem
also for this case.
\end{proof}

This result also has a geometric converse. 

\begin{proposition} Let $Z$ be a wavefront real spherical space with $H$ reductive.  
Then every factorization is weakly basic.
\end{proposition}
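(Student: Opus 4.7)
The plan is to induct on $\dim \hf^\star - \dim \hf$, decomposing the given factorization $H \subset H^\star$ by first peeling off every available basic step and then showing that what remains is co-compact. For the basic extraction, write $\gf = \gf_1 \oplus \cdots \oplus \gf_k$ as a direct sum of simple ideals and (at most) the one-dimensional center, and set $I = \{ j : \gf_j \subset \hf^\star \text{ and } \gf_j \not\subset \hf \}$. If $I$ is non-empty, form $H_I = H \prod_{j \in I} G_j$; this is reductive, $G/H_I$ inherits the wavefront real spherical hypotheses (the minimal parabolic $P$ still has an open orbit, and the compression cone of $G/H_I$ is the image of that of $Z$), and $H \subset H_I$ is a basic factorization sitting inside $H^\star$. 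Applying the induction hypothesis to $H_I \subset H^\star$ completes this case and reduces the problem to $I = \emptyset$: no full simple ideal of $\gf$ enters $\hf^\star$ beyond those already in $\hf$.

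In this residual situation I must show that $H^\star/H$ is compact, so that $H \subset H^\star$ is itself a single co-compact step. The proposed tool is the pair of wavefront identities $\af_Z^- = \af^-/\af_H$ and $\af_{Z^\star}^- = \af^-/\af_{H^\star}$ together with the volume asymptotics of Proposition \ref{volume}. The $G$-equivariant fibration $Z \to Z^\star$ has fiber $H^\star/H$, and comparing the exponents $\rho_\uf$ governing volume growth on $Z$ and $Z^\star$ along this fibration, together with the unimodularity of $H^\star/H$, should force $\af \cap \hf^\star = \af \cap \hf$. Combined with the spherical rigidity of the reductive subalgebra $\hf$ inside $\gf$ and our residual hypothesis that no full simple ideal contributes, this will imply that $\hf^\star/\hf$ admits no non-compact direction, hence $H^\star/H$ is compact.

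The main obstacle is precisely this last step: ruling out ``mixed'' non-compact contributions to $\hf^\star/\hf$, that is, semisimple or nilpotent elements which project non-trivially onto several simple factors of $\gf$ without entering any of them fully. The wavefront condition, which pins the compression cone to the image of the full negative Weyl chamber $\af^-$, should furnish the rigidity needed to exclude such mixed directions; extracting this cleanly from the local structure theorem of \cite{KKS} and the characterization of $\hf_{\rm lim}$ via the compression cone will be the technical heart of the argument.
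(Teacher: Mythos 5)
Your proposal is a program rather than a proof: the decisive step --- showing that, once every simple ideal of $\gf$ contained in $\hf^\star$ but not in $\hf$ has been absorbed into a basic step, the remaining extension $H^\star/H_I$ is compact --- is exactly the point you defer ("will be the technical heart of the argument"), and nothing in the proposal actually carries it out. The tools you name do not obviously suffice. Comparing the volume exponents $\rho_\uf$ of $Z$ and $Z^\star$ along the fibration $Z\to Z^\star$ and invoking unimodularity is not a mechanism that is shown to force $\af\cap\hf^\star=\af\cap\hf$; note also that $H^\star$ is only assumed closed with $G/H^\star$ unimodular, so $\hf^\star$ need not be reductive, and even granting the equality of the split parts you would still have to exclude nilpotent and "mixed" semisimple directions in $\hf^\star$, which you acknowledge but do not address. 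So as it stands there is a genuine gap, and it sits precisely where the real difficulty of the statement lies.

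For comparison, the paper does not argue geometrically at all: it deduces the proposition from Theorem \ref{BAN}. Since $Z^\star=G/H^\star$ is unimodular, $L^2(Z^\star)$ is a unitary representation, and one picks a generic irreducible unitary $(\pi,\Hc)$ weakly contained in it, with $\eta\in(\Hc^{-\infty})^{H^\star}\subset(\Hc^{-\infty})^{H}$ such that $H_\eta/H^\star$ is compact. Theorem \ref{BAN} (applied to $Z=G/H$) then says $G/H_\eta$ is a weakly basic factorization, and co-compactness of $H_\eta/H^\star$ transfers this to $G/H^\star$. In other words, the rigidity you hope to extract from the wavefront condition and the local structure theorem is obtained in the paper through the $L^p$-bounds on matrix coefficients (Howe--Moore via $\Lambda_V\in{\rm Int}(\af^+)^*$, with the wavefront hypothesis used to propagate the $A^-$-bounds to all of $A_Z^-$), packaged in Theorem \ref{BAN}. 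If you want to complete your route, you would either have to supply a direct structural argument excluding mixed non-compact subalgebras of $\hf^\star$ --- which is not in the paper and is not known to follow from the cited ingredients --- or fall back on the representation-theoretic argument, at which point your induction and peeling of basic steps become unnecessary.
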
 
 
\begin{proof} Let $Z^\star$ be a factorization of $Z$. Theorem \ref{BAN} implies that it is 
sufficient to exhibit an irreducible unitary representation $(\pi, {\mathcal H})$ of $G$ 
with $\eta \in ({\mathcal H}^{-\infty})^{H^\star}\subset ({\mathcal H}^{-\infty})^{H}$ such that $H_\eta/ H^\star$ is compact.  
As $Z^\star$ is unimodular, the left regular representation of $G$ on the Hilbert space 
$L^2(Z^\star)$ is unitary. 
Every generic irreducible unitary representation $(\pi, {\mathcal H})$ which is weakly contained 
in $L^2(Z^\star)$ has the requested property. 
\end{proof}

\begin{rmk} The geometric assumption that $H$ is reductive is in some sense
natural in the context of harmonic analysis. In \cite{KK} it is 
shown that if $Z=G/H$ is unimodular and $H$ is self-normalizing, 
then $H$ is reductive. 
\end{rmk}

\end{document}